\theoremstyle{definition}
\newtheorem*{ack*}{Acknowledgment}
\theoremstyle{remark}
\newtheorem{thm}{Theorem}[section]
\newtheorem{asp:mean}{Assumption}[section]
\newtheorem{lem}[thm]{Lemma}
\newtheorem{prop}[thm]{Proposition}
\newtheorem{cor}[thm]{Corollaray}
\newtheorem{remark}{Remark}[section]
\newtheorem{exa}[thm]{Example}
\newcommand{\p}{\mathrm{P}}
\newcommand{\pp}{\mathbb{P}}
\newcommand{\R}{\mathbb{R}}
\newcommand{\e}{\mathrm{e}}
\definecolor{wco}{rgb}{0.5,0.2,0.3}
\numberwithin{equation}{section} 
\begin{document}

\allowdisplaybreaks

\title{\bf Stationary measures and the continuous-state branching process conditioned on extinction}
\author{ \bf Rongli Liu, Yan-Xia Ren,
Ting Yang }
\date{}
\maketitle

\begin{abstract}
We consider continuous-state branching processes (CB processes) which become extinct almost surely. First, we tackle the problem of describing the stationary measures on $(0,+\infty)$ for such CB processes. We give a representation of the stationary measure in terms of scale functions of related L\'{e}vy processes. Then we prove that the stationary measure
can be obtained from the vague limit of the potential measure, and, in the critical case, can also be obtained from the vague limit of a normalized transition probability. Next, we prove some limit theorems for
the CB process conditioned on extinction in a near future and on extinction at a fixed time. We obtain non-degenerate limit distributions which are of the size-biased type of the stationary measure in the critical case and of the Yaglom's distribution in the subcritical case. Finally we explore some further properties of the limit distributions.
\end{abstract}

\bigskip

\noindent\textbf{Mathematics Subject Classification 2020:} Primary 60J80; Secondary 60F05

\bigskip

\noindent\textbf{Keywords and Phrases:} continuous-state branching process, stationary measure,
vague convergence, conditional limit theorems, size-biased  measure.

\section{Introduction} \label{sec:I}
A $[0,+\infty)$-valued strong Markov process $Z=(Z_t)_{t\ge 0}$
, with probabilities $\{\pp_{x}: x\ge 0\}$ and absorbing state $0$,
is called a continuous-state branching process (CB process for short) if it has paths which
are right continuous with left limits, $\pp_x\left(Z_0=x\right)=1$ for every
$x\ge 0$, and it employs the following
branching property: for any $\lambda\ge 0$ and
$x,y\ge 0$,
\begin{equation*}
\pp_{x+y}\left[\e^{-\lambda Z_t}\right]=\pp_x\left[\e^{-\lambda Z_t}\right]\pp_y\left[\e^{-\lambda Z_t}\right],
\end{equation*}
where $\pp_x$ denotes the expectation with respect to the probability $\pp_x$.
We suppose that $Z$ has branching mechanism $\psi$,
which is specified by L\'evy-Khintchine formula
\begin{equation}\label{def:psi}
\psi(\lambda)=\alpha\lambda+\frac{1}{2}\sigma^2\lambda^2+\int_0^{+\infty} (\e^{-\lambda r}-1+\lambda r)\pi(\mathrm{d}r),\quad \lambda\geq 0,
\end{equation}
where $\alpha\in\mathbb R,\sigma\geq 0$, and $\pi$ is a positive Radon measure on $(0,+\infty)$ such that
    $\int_0^{+\infty} (r\wedge r^2)\pi(\mathrm{d}r)<+\infty$.
One has $\pp_x[Z_t]=x\e^{-\psi'(0+)t}$ for all $x,t\ge 0$.
Since $\psi'(0+)=\alpha$, the process $(Z_t)_{t\ge 0}$ is called supercritical, critical and subcritical accordingly as $\alpha<0$, $\alpha=0$ and $\alpha>0$.
In this paper, we restrict our attention to the cases when
the CB processes hit $0$ with probability $1$,
that is, those critical or subcritical CB processes with branching mechanism $\psi$ satisfying that $\int^{+\infty}1/\psi(\lambda)\mathrm{d}\lambda<+\infty$.

We are concerned with the stationary measures of CB processes. Since $0$ is an absorbing state, the unique (up to a constant multiple) stationary measure on the state space $[0,+\infty)$ is the Dirac measure at $0$
(cf. \cite[P23--24]{Ha} ).
Therefore, we shall exclude the state $0$, and call a Radon measure $\nu$ on $(0,+\infty)$ a stationary measure for $(Z_{t})_{t\ge 0}$ if for any $t>0$ and any Borel set $A\subset (0,+\infty)$,
\[
\pp_\nu(Z_t\in A)=\nu(A),
\]
where $\pp_\nu(Z_t\in A)=
\int_{(0,+\infty)}\pp_{x}(Z_{t}\in A)\nu(\mathrm{d}x)$.
It is well-known that a CB process can be viewed as the analogue of
the Galton-Watson process
(GW process) in continuous time and continuous state space.
Before anything starts, let us first review some classical results concerning stationary measures for GW processes.
A standard reference is Athreya and Ney \cite{AN}. (see, also,
Asmussen and Hering\cite{AH},
Hoppe \cite{H}, Nakagawa \cite{NT} and Ogura and Shiotani \cite{OS} for related discussions for multitype GW processes.)
Suppose $(Y_{n})_{n\ge 0}$ is a GW process taking values in $\mathbb{Z}_{+}=\{0,1,2,\cdots\}$ with offspring distribution
$(p_k)_{k\ge 0}.$
Let $m=\sum_{j=1}^{+\infty}j p_j$ be the reproduction mean and $q=\mathrm{P}\left(Y_{n}=0\mbox{ eventually }|Y_{0}=1\right)$ be the extinction probability.
Unless $p_{1}=1$, $q<1$ iff $m>1$ (supercritical case). Hence extinction occurs almost surely in the critical
($m=1,p_1<1$)
and subcritical ($m<1$) cases.  We call $(\eta_i)_{i\ge 1}$ a stationary measure for $(Y_{n})_{n\ge 0}$ if $\eta_{i}\ge 0$ for all $i\ge 1$, and
\[
\eta_j=\sum_{i=1}^\infty \eta_i P(i,j),\qquad j\geq 1,
\]
where $(P(i,j))_{i,j\geq 0}$ denote the one step transition probabilities of $(Y_n)_{n\ge 0}$.
\cite[Theorem II.1.2]{AN} tells us that $(\eta_i)_{i\ge 1}$ is a stationary measure iff its generating function
$U(s)=\sum_{i=1}^{+\infty}\eta_is^i$ is analytic for $|s|<q$, and satisfies
Abel's equation
\begin{eqnarray*}\label{Eq Abel}
U(f(s))=U(p_0)+U(s),\qquad |s|<q,
\end{eqnarray*}
where $f$ is the generating function of the offspring distribution
$(p_k)_{k\ge 0}.$
In the supercritical case, if $q=0$, the only stationary measure is $\eta_i=0$ for all $i\ge 1$, otherwise if $q>0$, then the construction of stationary measures can be handled by reduction to the subcritical case, see, \cite[II.2]{AN}. Now we focus on the critical and subcritical cases.
It is proved in \cite{AN} that in the critical case
a (nontrivial) stationary measure
exists and is unique (up to a constant multiple), while in the subcritical case the stationary measure is not unique. In fact, in the critical case, the stationary measure
 is determined by the ratio limit of the $n$-step transition probabilities (cf. \cite[Lemma I.7.2 and Theorem II.2.1]{AN} and \cite{P}).
The continuous-time analogue of this result is due to \cite[Lemma 7]{I}.
In the subcritical case, the problem of determining all stationary measures is settled by Alsmeyer and R\"{o}sler \cite{AR}, where it is proved that
every stationary measure has
a unique integral representation in terms of the Martin entrance boundary and a finite measure on $[0,1)$.

 For a continuous-time GW process with transition functions
$\{p_{ij}(t):\ t\ge0,\ i,j\in\mathbb{Z}^+\}$,
 a stationary measure is a set of nonnegative numbers $\{\nu_j:j\ge 1\}$ satisfying that
 $$
 \nu_j=\sum_{i\ge 1}\nu_ip_{ij}(t),\quad j\ge 1,\ t\ge 0.
 $$
 In contrast to the discrete-time situation,
 for continuous-time GW process, a nontrivial stationary measure
 exists and is unique (up to a constant multiple) in both critical and subcritical cases, see, \cite[Lemma 7]{I} for the critical case and \cite[Corollary 8]{M} for the subcritical case. A similar phenomenon happens for CB processes, see Ogura\cite{Y}.
 Namely, assuming extinction occurs almost surely, the CB process has a unique nontrivial stationary measure.
 Indeed, Ogura has established the functional equation satisfied by the Laplace transform of the stationary measure (see, \cite[Lemma 1.2]{Y}), which can be viewed as the continuous counterpart of the above Abel's equation.

 In this paper, we are interested in the description of stationary measure of CB process from different points of view. We extend
 Ogura's results in the following three aspects: Firstly, we establish a representation of the stationary measure for CB processes in terms of the so called scale functions of the related L\'evy processes (Theorem \ref{them1}). Secondly, we prove that the transition probability on $(0,+\infty)$ of the CB process, when appropriately normalized, converges vaguely, and we obtain the precise limit measure (Theorem \ref{thm con of semigroup}). We shall see from this result that, in the critical case, the stationary measure can be obtained from the vague limit of an appropriately normalized transition probability of the CB process, giving an analogue of the ratio limit
 theorem (cf. \cite[Lemma I.7.2]{AN}). We remark that more regularity properties of the transition probabilities were investigated in \cite{CLP,Y1,Y2} for CB processes (with or without immigration), under additional analytical assumptions on the branching mechanisms.
 Finally, we obtain a representation of the potential measure of the CB process in terms of the scale functions, and we
 prove that the stationary measure can also be obtained from the vague limit of the potential measure in both
 critical and subcritical cases (Theorem \ref{thm con of green}).
 In the context of GW processes, result of this type is obtained in \cite{OS} for the critical case (under additional assumptions
  on the reproduction law) and in \cite{AR} for the subcritical case.
  Our proof is based on the relation between CB processes and L\'{e}vy processes through the so called Lamperti transform (see, Section \ref{sec2.1} below),
  and is easier
  than the proofs for the discrete state situation.
  Furthermore, we give equivalent conditions, depending on the branching mechanisms, for the potential measures to be finite (Proposition \ref{prop:potential}).

In this paper we also aim at linking the stationary measure to some conditional limit theorems of CB processes.
Conditional limit theorems constitute an important part of the limit theory of branching processes. There has been a lot of work on various conditional limit theorems for branching processes, see, for example, \cite{AN,E,AGP} for
discrete state situation,
and \cite{L,Li2,Pakes,RYZ} for
continuous state situation.
Suppose $(Z_t)_{t\ge 0}$ is a CB process which becomes extinct almost surely.
A usual conditioning
is made on extinction after some time $t$.  Let $\zeta$ be the extinction time. The asymptotic behavior of $Z_t$ conditioned on $\{\zeta>t\}$ is described in the so-called Yaglom's theorem. Namely, in the subcritical case, there is a probability measure $\rho$
on $(0,+\infty)$,
called the Yaglom distribution, such that for any $x>0$
and any Borel set $A\subset (0,+\infty)$,
\begin{equation}\label{eq:Yaglom}
\lim_{t\to+\infty}\pp_x(Z_t\in A|\zeta>t)=\rho(A).
\end{equation}
The Yaglom distribution belongs to the family of quasi-stationary distributions of CB processes.
A brief review of the latter
is given in the end of Section \ref{sec:II}.
By contrast,
the critical case is degenerate since all the limits on the left hand side of \eqref{eq:Yaglom} are $0$.
However, by taking different conditioning instead of conditioning on non-extinction,
one may get non-degenerate results for both critical and subcritical cases.
In Section \ref{sec:IV}, we consider two special conditioning events: $\{t\le \zeta<t+s\}$ and $\{\zeta=t\}$ ($t,s>0$). The former is regarded as conditioning on extinction in the near future $[t,t+s)$ and the latter as conditioning on extinction at time $t$.
When the extinction time $\zeta$ is finite almost surely, the event $\{t\le \zeta<t+s\}$ is of positive probability and this conditioning can be made in the usual sense. But $\{\zeta=t\}$ is of $0$ probability, and this conditioning is made
by taking limit of the conditional probability on $\{t\le \zeta<t+s\}$ as $s\to 0+$, or equivalently, by taking a Doob $h$-transform.
The study of CB process conditioned on $\{\zeta=t\}$ dates back to \cite{AD}, in which it was shown
that CB process has a spinal
decomposition, called Williams decomposition, under such a conditional probability.
Later,
similar property
for superprocesses was studied in \cite{DH,RSZ}.
For GW processes, similar conditioning is studied by Esty \cite{E}. We remark that
Esty \cite{E}
considers only critical GW processes, while we allow the CB process to be either critical or subcritical.

In this paper we prove
some limit theorems for CB processes conditioned on the aforementioned two events.
Our two principal results, Theorem \ref{thm:conditioned on near extinction} and Theorem \ref{thm for conditioned on fix time}, show that the distributions of $Z_{t-q}$ ($0<q<t$) conditioned on extinction in the near future $[t-q,t)$ and on extinction at time $t$, are convergent as $t$ goes to infinity, and we also obtain the precise limit distributions.
From these results, we shall see that the limit distributions obtained in the critical (resp. subcritical) case are of the size-biased type of the stationary measure (resp. the Yaglom distribution). As a by-product, in the critical case, we prove that the limit distribution of $Z_{t-q}$ ($0<q<t$) conditioned on $\{\zeta=t\}$ is of the size-biased type of the stationary measure, giving an analogue of \cite[Theorem I.8.2]{AN}.
Our proofs of the conditional limit theorems are based on the asymptotic estimates of the Log-Laplace functional of CB process derived from the integral equations it satisfies.
Moreover, we investigate properties of the limit distribution of $Z_{t-q}$ conditioned on extinction at time $t$.
We show that the limit is
infinitely divisible and give a representation of its L\'{e}vy-Khintchine triplet in terms of the scale functions (Proposition \ref{prop4.5}). In the subcritical case, we prove that it is weakly convergent as $q\to+\infty$ to a non-degenerate distribution under an additional $L\log L$ condition (Proposition \ref{prop V_infty}). As an application of these results, we present a new proof of a limit theorem for the CB process conditioned on non-extinction (Proposition \ref{prop1}).

We notice that by conditioning a supercritical CB process to be extinct, one recovers a subcritical CB process. To be more specific, if $\gamma$ is the largest root of $\psi(\lambda)=0$, then $\gamma>0$ in the supercritical case, and the supercritical CB process with branching mechanism $\psi$ conditioned on its extinction turns out to be a subcritical CB process with branching mechanism $\psi^*(\lambda)=\psi(\lambda+\gamma)$.
As a consequence, our conditional limit theorems obtained for
the subcritical case can be applied to supercritical CB processes conditioned to be extinct.

The remainder of this paper is organized as follows.
In Section \ref{sec:II} we recall the definition of CB processes and review some classical results concerning CB processes and L\'{e}vy processes. Then we give a representation of the stationary measure in terms of the scale functions of the related L\'{e}vy process.
In Section \ref{sec:III}, we prove the vague convergence of the normalized transition probabilities and potential measures of CB processes. Some examples are given to illustrate the results obtained in this section.
In Section \ref{sec:IV},
we study the probabilities of $Z_t$ conditioned on extinction in the near future and on extinction at a fixed time, prove some conditional limit theorems and explore some properties of the limit distributions.
Some minor statements needed along the way are proved in the Appendix.

Throughout this paper, we use ``:=" as a way of definition. For positive functions $f, g$ on $(0,+\infty)$ and constant $c\in [0,+\infty)$, we write $f(x)\sim g(x)$ as $x\to c$ if $\lim_{x\to c}f(x)/g(x)=1$. For a measure $\mu$ on $(0,+\infty)$ and
a measurable function $f$, we write $\langle f,\mu\rangle$ for the integral $\int_{(0,+\infty)}f(x)\mu(\mathrm{d}x)$.
Suppose $\nu_{n},\nu$ are measures on $(0,+\infty)$. $\nu_{n},\nu$ can be extended to measures on the larger space $[0,+\infty)$ by setting $\nu_{n}(\{0\})=\nu(\{0\})=0$. We define the vague convergence following \cite{BW}: $\nu_{n}$ is said to converge vaguely to $\nu$ if $\int_{[0,+\infty)}g(y)\nu_{n}(\mathrm{d}y)\to \int_{[0,+\infty)}g(y)\nu(\mathrm{d}y)$ for all continuous functions $g$ on $[0,+\infty)$ vanishing at infinity. If $\nu_{n},\nu$ are finite measures and $\langle f,\nu_{n}\rangle\to\langle f,\nu\rangle$ for all bounded continuous functions $f$ on $(0,+\infty)$, we say $\nu_{n}$ converges weakly to $\nu$.

\section{Preliminaries}\label{sec:II}
\subsection{CB processes and L\'{e}vy processes}\label{sec2.1}
Let $((Z_t)_{t\geq 0}, \pp_x)$ be the CB process with branching mechanism $\psi(\lambda)$ given in \eqref{def:psi}
and initial value $x>0$.  Following \cite{Kyprianou}, such a process is a time-homogeneous strong Markov process
taking values in $[0,+\infty)$ with an absorbing state $0$, such that for any $\lambda>0$,
\begin{equation}\label{eq:log-laplace}
\pp_x\left[\e^{-\lambda Z_t}\right]=\e^{-xu_t(\lambda)},\qquad t\ge 0,
\end{equation}
where $u_t(\lambda)$ is the solution to the following ordinary differential equation
\begin{equation}\label{eq:ODE}
\begin{cases}
\dfrac{\partial u_t(\lambda)}{\partial t}=-\psi(u_t(\lambda)),\\
u_0(\lambda)=\lambda.
\end{cases}
\end{equation}
We assume that $\psi(+\infty)=+\infty$. Thus by \cite[Theorem 12.3]{Kyprianou}) $(Z_t)_{t\ge 0}$ is conservative in the sense that
$\pp_x\left(Z_t<+\infty\right)=1$
for all $x>0$ and $t\ge 0$. \cite[Chapter $3$]{Li} is also a good reference for continuous state branching processes.

Let $\zeta:=\inf\{t>0:\ Z_t=0\}$  be the extinction time.
It follows by \eqref{eq:log-laplace} that
\begin{equation}\label{eq:extinction finite time}
\pp_x\left(\zeta\le t\right)=\pp_x\left(Z_t=0\right)=\e^{-x u_t(+\infty)},\quad\forall x,t>0.
\end{equation}
Let $q(x):=\pp_x\left(\zeta<+\infty\right)$ for $x> 0$.  It is proved in \cite{Grey} that
$q(x)>0$ for some (and then all) $x>0$ if and only if
\begin{equation}\label{extinct assumption}
\int^{+\infty}\frac{1}{\psi(\lambda)}\mathrm{d}\lambda<+\infty.
\end{equation}
In this case $q(x)=\e^{-x\gamma}$, where
\begin{eqnarray}
\nonumber
\gamma:=\sup\{\lambda\ge 0:\ \psi(\lambda)=0\}.
\end{eqnarray}
We know that $\psi$ is strictly convex and infinitely differentiable
on $(0,+\infty)$ with $\psi(0)=0$, $\psi(+\infty)=+\infty$ and $\psi'(0+)=\alpha$.
So we have $\gamma>0$ if $\alpha<0$ (supercritical case)
and $\gamma=0$ if $\alpha\ge 0$ (critical and subcritical cases).

Assuming \eqref{extinct assumption} holds, we can define a strictly decreasing function $\phi$ on $(\gamma,+\infty)$ by
\[
\phi(\lambda):=\int_\lambda^{+\infty}\frac{1}{\psi(u)}\mathrm{d}u, \qquad
\lambda>\gamma.
\]
It is easy to see that $\phi(\gamma)=+\infty$ and $\phi(+\infty)=0$.
Let $\varphi$ be the inverse function of $\phi$, which is defined on $(0,+\infty)$ and takes values in $(\gamma,+\infty)$.
From \eqref{eq:ODE} we have
\begin{equation}\label{integral equation}
\int_{u_t(\lambda)}^\lambda\dfrac{1}{\psi(u)}\mathrm{d}u=t,\qquad \lambda,t>0.
\end{equation}
By letting $\lambda\to+\infty$, we have
\begin{equation}\label{eq:1.6}
\int_{u_t(+\infty)}^{+\infty}\frac{1}{\psi(u)}\mathrm{d}u=t.
\end{equation}
 Recall that
 $u_t(+\infty)=-\log \pp_1(\zeta\le t)\ge -\log\pp_{1}(\zeta<+\infty)=\gamma$.
 One gets by \eqref{eq:1.6} that $u_t(+\infty)=\varphi(t)$ for all $t>0$, and consequently,
\begin{equation}\label{prob of extintion at t}
\pp_x(\zeta\leq t)=\e^{-x\varphi(t)}, \qquad  x,t>0.
\end{equation}
Particularly, if
$(Z_t)_{t\ge 0}$
is critical or subcritical, then $\gamma=0$ and \eqref{integral equation} yields
that
\begin{equation}\label{rela of u and varphi}
u_t(\lambda)=\varphi(t+\phi(\lambda)),\qquad \lambda, t>0.
\end{equation}

We note that $\psi$ is also the Laplace exponent of a spectrally positive L\'{e}vy process $(X_t)_{t\ge 0}$.
We denote by $\p_x$ the law of $(X_t)_{t\ge 0}$
started at $x\in\R$
at time $0$.
Then
$$
\p_x\left[\e^{-\lambda X_t}\right]=
\e^{-\lambda x+\psi(\lambda)t},
\qquad\lambda,t\ge 0.
$$

Define $\tau^-_0:=\inf\{t\ge 0:\ X_t<0\}$ with the convention that $\inf\emptyset=+\infty$. There is a sample-path relationship between the CB process $(Z_t)_{t\ge 0}$ and the L\'{e}vy process $(X_t)_{t\ge 0}$ stopped at $\tau^-_0$, called the Lamperti transform (cf. \cite[Theorem 12.2]{Kyprianou}
or \cite{Bin}).
Define for $t\ge 0$,
$$
 \theta_t:=\inf\left\{s>0:\ \int_0^s\frac{1}{X_u}\mathrm{d}u>t\right\}.
$$
Then $\left((X_{\theta_t\wedge \tau^-_0})_{t\ge 0},\p_x\right)$
is a CB process with branching mechanism $\psi$ and initial value
$x>0$.
We refer to \cite[Chapter 12]{Kyprianou} for the results on the long-term behavior of
CB process
based on the fluctuation theory of spectrally positive L\'{e}vy process.

\subsection{Representation of the stationary measure}
In what follows and for the remainder of this paper, we assume $(Z_t)_{t\ge 0}$ is a CB process with branching mechanism $\psi$ satisfying \eqref{extinct assumption} and
$\psi'(0+)=\alpha\ge 0$.
In this subsection we shall give a representation of the stationary measure of
$(Z_{t})_{t\ge 0}$
in terms of the so called scale function.
Recall that the scale function $W$ is the unique strictly increasing and positive continuous function on $[0,+\infty)$ such that
\begin{equation}\label{Laplace of scale function}
\int_0^{+\infty} \e^{-\lambda x} W(x)\mathrm{d}x=\frac{1}{\psi(\lambda)},\qquad
\lambda>0.
\end{equation}
We define $W(x)=0$ for $x<0$.
We refer to \cite[Chapter VII]{Bertoin} and \cite{KKR} for the general theory of scale functions.

We write $\int_{0+}^{+\infty}$ for $\int_{(0,+\infty)}$ to emphasize the integral is on $(0,+\infty)$.
For a measure $\nu$ on $(0,+\infty)$, we set $\widehat{\nu}(\lambda):=\int_{0+}^{+\infty}\e^{-\lambda x}\nu(\mathrm{d}x)$ for $\lambda\ge 0$
whenever the right hand side is well-defined.

\begin{thm}\label{them1}
Set
\begin{equation}
\nonumber
\mu(\mathrm{d}x):=\frac{W(x)}{x}\mathrm{d}x
\quad\mbox{ for }x>0.
\end{equation}
Then $\mu(\mathrm{d}x)$ is the unique (up to a constant multiple) stationary measure for $(Z_t)_{t\ge0}$.
\end{thm}

\begin{proof}
Due to \cite[Lemma 1.2 and Proposition 1.3]{Y}, for a CB process which satisfies \eqref{extinct assumption}, there exists a unique (up to a constant multiple) stationary measure
$\varrho$
such that
\begin{equation}
\nonumber
\widehat{\varrho}(\lambda)
=\phi(\lambda),
\quad \lambda>\gamma.
\end{equation}
Recall that the CB process we consider in this theorem is
critical or subcritical,  and then $\gamma=0$.
We have for any $\lambda>0$,
\begin{eqnarray*}
\int_{0+}^{+\infty} \e^{-\lambda x}\mu(\mathrm{d}x)
&=&\int_{0+}^{+\infty}\mu(\mathrm{d}x)\int_{\lambda}^{+\infty}x\e^{-ux}\mathrm{d}u\\
&=&\int_0^{+\infty} W(x) \mathrm{d}x \int_{\lambda}^{+\infty} \e^{-ux} \mathrm{d}u \\
&=&\int_{\lambda}^{+\infty}\mathrm{d}u\int_{0}^{+\infty}W(x)\e^{-ux}\mathrm{d}x\\
&=&\int_{\lambda}^{+\infty} \frac{1}{\psi(u)}\mathrm{d}u
=\phi(\lambda).
\end{eqnarray*}
Hence $\mu$ is the unique (up to a constant multiple) stationary measure.
\end{proof}

\begin{remark}
Suppose $(Z_t)_{t\ge 0}$ is a supercritical CB process satisfying \eqref{extinct assumption} and $\gamma=\sup\{\lambda\ge 0:\ \psi(\lambda)=0\}>0$.
Repeating the calculation in the proof of Theorem \ref{them1},
we can show that, for $\mu(\mathrm{d}x)=\frac{W(x)}{x}\mathrm{d}x$,
\begin{equation}
\nonumber
\int_{0+}^{+\infty}\e^{-\lambda x}\mu(\mathrm{d}x)=\phi(\lambda),
\quad \lambda>\gamma.
\end{equation}
Hence the result of Theorem \ref{them1}
also holds for this supercritical CB process.
\end{remark}

We notice that $\widehat{\mu}(0)=\phi(0)=+\infty$. So $\mu$ is an infinite measure on $(0,+\infty)$. Theorem \ref{them1} implies that the CB process has no stationary distributions on $(0,+\infty)$. Instead, one may consider a subinvariant distribution, called the quasi-stationary distribution (QSD). For a CB process, a QSD is a probability measure $\nu$ on $(0,+\infty)$ satisfying that
\begin{equation}\label{eq:def-quasistationary}
\pp_{\nu}\left(Z_t\in A\,|\,\zeta>t\right)=\nu(A)
\end{equation}
for any Borel set $A\subset (0,+\infty)$ and $t>0$.
One can easily show by the Markov property that
$$
\pp_{\nu}(\zeta>t+s)=\pp_{\nu}(\zeta>t)
\pp_{\nu}(\zeta>s),\qquad t,s\ge 0.
$$
Hence the extinction time $\zeta$ under $\pp_{\nu}$
is exponentially distributed with some parameter $\beta>0$. So \eqref{eq:def-quasistationary} is equivalent to
$$
\pp_{\nu}(Z_{t}\in A)=\e^{-\beta t}\nu(A)
$$
for any Borel set $A\subset (0,+\infty)$ and $t>0$.
A discrete state analogue
is the so called $\lambda$-invariant measure, for which we refer to \cite{M}.
Lambert \cite{L}
has given a complete characterization of QSD's for CB processes.
It is proved in \cite{L}
that a subcritical CB process has QSD's while a critical CB process has no QSD. In fact, for a subcritical CB process with $\psi'(0+)=\alpha>0$,  all QSD's form a stochastically decreasing family $\{\nu_{\beta}\}$ of probabilities indexed by $\beta\in (0,\alpha]$ satisfying that
\begin{equation}\label{lap of quasi}
\widehat{\nu}_{\beta}(\lambda)=1-\e^{-\beta\phi(\lambda)},
\qquad \lambda>0.
\end{equation}
The probability $\nu_{\alpha}$ is the so-called Yaglom distribution in the sense that
\begin{equation}\label{eq:yaglom limit}
\lim_{t\to+\infty}\pp_x\left(Z_t\in A\,|\,\zeta>t\right)=\nu_{\alpha}(A)
\end{equation}
for every $x>0$ and Borel set $A\subset (0,+\infty)$.
The conditional limit of \eqref{eq:yaglom limit} is due to Li \cite[Theorem 4.3]{Li2},
where more general conditioning of the type $\{\zeta>t+r\}$ with $r\geq 0$ is considered.
From the theory of Laplace transform,
the QSD $\nu_{\beta}$ can be expressed by the stationary measure
$\mu$ as
\[
\nu_\beta(\mathrm{d}x)=-\sum_{n=1}^{+\infty} \frac{(-\beta)^n}{n!}\mu^{*n}(\mathrm{d}x),
\]
where $\mu^{*n}$ denotes the $n$ fold convolution of $\mu$.
On the other hand, since $\widehat\nu_{\beta}(\lambda)/\beta\to \phi(\lambda)$ as $\beta\to 0+$ for all $\lambda>0$, we get that
$\frac{1}{\beta}\nu_{\beta}$ converges vaguely to $\mu$ as $\beta\to 0+$.

Though there is no QSD in the critical case,
convergence results are established for the rescaled process $Q_{t}Z_{t}$ conditioned on $\{\zeta>t\}$, where $Q_t\to 0$ as $t\to+\infty$.
It is proved by \cite[Theorem 5.2]{Li2}
that if the critical CB process has finite variance, that is, $\psi''(0+)<+\infty$, then $Z_t/t$ conditioned on
$\zeta>t$ converges in distribution to an exponential distribution random variable with parameter $2/\psi''(0+)$.
We refer to \cite{RYZ} for the case allowing infinite variance.

\section{Convergence of transition probabilities and potential measures}\label{sec:III}
Let $(P_t(x,\mathrm{d}y); t\geq 0, x,y\geq 0)$ be the transition probability of the CB process $(Z_t)_{t\ge 0}$.  Firstly, we shall show that the transition
probability $P_t(x,\mathrm{d}y)$ on $(0,+\infty)$, when appropriately normalized, converges vaguely to a precise measure.  For notational simplification, we
still
use $P_t(x,\mathrm{d}y)$ to denote the restriction of $P_t(x,\mathrm{d}y)$ on $(0,+\infty)$.

\begin{lem}\label{lem3.1}
If $\alpha=0$, then
\begin{equation*}
\lim_{t\to+\infty}\frac{\varphi(t)-\varphi(t+\phi(\lambda))}{\psi(\varphi(t))}=
\phi(\lambda),\qquad\lambda>0.
\end{equation*}
\end{lem}

\begin{proof}
It follows by the monotone convergence theorem that
$$
\psi'(\lambda)=\sigma^2\lambda+\int_0^{+\infty} (1-\e^{-\lambda r})r\pi(\mathrm{d}r)\to 0\quad\mbox{ as }\lambda\to 0+.
$$
We note that
$(\psi(\varphi(t)))'=-\psi'(\varphi(t))\psi(\varphi(t))$ for $t>0$ and that
$t\mapsto\varphi(t)$ is strictly decreasing on $(0,+\infty)$ with $\varphi(+\infty)=0$.
Thus for any $s>0$,
\begin{equation}\nonumber
\ln\frac{\psi(\varphi(t+s))}{\psi(\varphi(t))}=-\int_t^{t+s}\psi'(\varphi(u))\mathrm{d}u\to 0\quad\mbox{ as }t\to+\infty.
\end{equation}
It follows that
\begin{equation}\label{ratio limit}
\lim_{t\to+\infty}\frac{\psi(\varphi(t+s))}{\psi(\varphi(t))}=1.
\end{equation}
By the mean value theory, for every $t>0$ and $\lambda>0$ there exists $\Delta_t(\phi(\lambda))\in[0,\phi(\lambda)]$ such that
\begin{equation}\label{lem3.1.2}
\frac{\varphi(t)-\varphi(t+\phi(\lambda))}{\psi(\varphi(t))}=\frac{\psi(\varphi(t+\Delta_t(\phi(\lambda))))}{\psi(\varphi(t))}\phi(\lambda).
\end{equation}
Since $t\mapsto\psi(\varphi(t))$ is strictly decreasing on $(0,+\infty)$, we have
$$
\frac{\psi(\varphi(t+\phi(\lambda)))}{\psi(\varphi(t))}\le \dfrac{\psi(\varphi(t+\Delta_t(\phi(\lambda))))}{\psi(\varphi(t))}\le 1.
$$
By \eqref{ratio limit},
$$
\lim_{t\to+\infty}\dfrac{\psi(\varphi(t+\Delta_t(\phi(\lambda))))}{\psi(\varphi(t))}=1.
$$
Combining this with \eqref{lem3.1.2} we get
\[
\lim_{t\to+\infty}\frac{\varphi(t)-\varphi(t+\phi(\lambda))}{\psi(\varphi(t))}
=\lim_{t\to+\infty}\frac{\psi(\varphi(t+\Delta_t(\phi(\lambda))))}{\psi(\varphi(t))}\phi(\lambda)=\phi(\lambda).
\]
\end{proof}

\begin{thm}\label{thm con of semigroup}
If $\alpha>0$, then for every $x>0$, $\frac{1}{x\psi(\varphi(t))}P_t(x,\mathrm{d}y)$
converges weakly to $\frac{1}{\alpha}\nu_{\alpha}(\mathrm{d}y)$ as $t\to+\infty$.  Otherwise if $\alpha=0$, then for every $x>0$, $\frac{1}{x\psi(\varphi(t))}P_t(x,\mathrm{d}y)$
converges vaguely to $\mu(\mathrm{d}y)$ as $t\to+\infty$.
\end{thm}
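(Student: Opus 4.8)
The plan is to establish both cases at once by computing the Laplace transform of the normalized restricted measure and letting $t\to+\infty$ with the help of Lemma~\ref{lem3.1}. Fix $x>0$. Since $P_t(x,dy)$ here stands for the restriction of the transition probability to $(0,+\infty)$, for every $\lambda>0$ its Laplace transform is
\begin{equation*}
\int_{0+}^{+\infty}\e^{-\lambda y}P_t(x,dy)=\pp_x\left[\e^{-\lambda Z_t};\,Z_t>0\right]=\e^{-xu_t(\lambda)}-\e^{-x\varphi(t)},
\end{equation*}
by \eqref{eq:log-laplace} and \eqref{prob of extintion at t}. Using $u_t(\lambda)=\varphi(t+\phi(\lambda))$ from \eqref{rela of u and varphi}, the Laplace transform of $\tfrac{1}{x\psi(\varphi(t))}P_t(x,dy)$ equals $\bigl(\e^{-x\varphi(t+\phi(\lambda))}-\e^{-x\varphi(t)}\bigr)/\bigl(x\psi(\varphi(t))\bigr)$.

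Next I would linearize. Writing $a=\varphi(t+\phi(\lambda))$ and $b=\varphi(t)$, one has $0<a<b$ and $a,b\to 0$ as $t\to+\infty$, while $\e^{-xa}-\e^{-xb}=\int_a^b x\e^{-xs}\,ds=x(b-a)(1+o(1))$ since $\e^{-xs}\to 1$ uniformly on $[a,b]$. Consequently
\begin{equation*}
\frac{\e^{-x\varphi(t+\phi(\lambda))}-\e^{-x\varphi(t)}}{x\psi(\varphi(t))}=\frac{\varphi(t)-\varphi(t+\phi(\lambda))}{\psi(\varphi(t))}\,(1+o(1)),
\end{equation*}
and Lemma~\ref{lem3.1} shows that the right-hand side tends to $\tfrac{1-\e^{-\alpha\phi(\lambda)}}{\alpha}=\tfrac{1}{\alpha}\widehat\nu_\alpha(\lambda)$ when $\alpha>0$ (by \eqref{lap of quasi}) and to $\phi(\lambda)=\widehat\mu(\lambda)$ when $\alpha=0$ (by Lemma~\ref{lem for stationary measure}). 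Thus for every $\lambda>0$ the Laplace transform of the normalized measure converges to that of the candidate limit.

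It remains to pass from pointwise convergence of Laplace transforms to convergence of the measures, which I regard as the main obstacle: for $\alpha=0$ the limit $\mu$ is an infinite measure and $\e^{-\lambda y}$ is not compactly supported on $(0,+\infty)$, so one cannot test directly against exponentials. I would circumvent this by fixing $\lambda_0>0$ and considering the \emph{finite} tilted measures $\e^{-\lambda_0 y}\tfrac{1}{x\psi(\varphi(t))}P_t(x,dy)$, whose Laplace transform at $\lambda\ge 0$ is the transform of the normalized measure evaluated at $\lambda+\lambda_0$ and hence converges to $\widehat\mu(\lambda+\lambda_0)$ (resp. $\tfrac1\alpha\widehat\nu_\alpha(\lambda+\lambda_0)$). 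Since this limiting transform is continuous at $0$ and its value there equals the limit of the total masses, the classical continuity theorem for Laplace transforms of finite measures on $[0,+\infty)$ yields weak convergence of the tilted measures. Testing against $f(y)\e^{\lambda_0 y}$ for $f\in C_c((0,+\infty))$ (which lies in $C_b([0,+\infty))$ since $f$ vanishes near $0$) then gives the vague convergence of $\tfrac{1}{x\psi(\varphi(t))}P_t(x,dy)$ to $\mu$ (resp. to $\tfrac1\alpha\nu_\alpha$).

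Finally, in the case $\alpha>0$ I would upgrade vague convergence to weak convergence by checking that the total masses converge to the right value: letting $\lambda\to0+$ (so $\phi(\lambda)\to+\infty$), the total mass $\tfrac{1-\e^{-x\varphi(t)}}{x\psi(\varphi(t))}\sim\varphi(t)/\psi(\varphi(t))\to1/\alpha$, which equals $\tfrac1\alpha\nu_\alpha((0,+\infty))$ since $\nu_\alpha$ is a probability measure. Convergence of total masses together with the vague convergence rules out any escape of mass to $0$ or to $+\infty$, and hence delivers the asserted weak convergence to $\tfrac1\alpha\nu_\alpha$.
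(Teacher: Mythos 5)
Your proposal is correct and follows essentially the same route as the paper: compute the Laplace transform of the normalized restricted transition probability, apply Lemma~\ref{lem3.1} (together with the $\lambda=0$ total-mass computation when $\alpha>0$) to identify the pointwise limit, and then pass from convergence of Laplace transforms to vague/weak convergence of measures. The only difference is that the paper delegates this last step to Lemma~\ref{A1} in the appendix, whereas you reprove it inline via exponential tilting and the classical continuity theorem --- which is in substance the same argument used to prove Lemma~\ref{A1}.
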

\begin{proof}
By Lemma \ref{A1}, it suffices to show that for any $x>0$,
\begin{equation}\label{them3.2.1}
\lim_{t\to+\infty}\frac{1}{\psi(\varphi(t))}\int_{0+}^{+\infty}\e^{-\lambda y}P_t(x,\mathrm{d}y)=
\begin{cases}
\frac{x}{\alpha}\widehat{\nu}_{\alpha}(\lambda), \,\,\quad\forall \lambda\ge 0,&\quad\mbox{if}\quad \alpha>0;\\
x\widehat{\mu}(\lambda),\qquad\forall \lambda>0,& \quad\mbox{if}\quad \alpha=0.
\end{cases}
\end{equation}
For any
$\lambda\ge 0$,
we have
$$
\int_{0+}^{+\infty}\e^{-\lambda y}P_t(x,\mathrm{d}y)=\pp_x\left[\e^{-\lambda Z_t},Z_t>0\right]=\e^{-xu_t(\lambda)}-\e^{-x\varphi(t)}.
$$
If $\alpha>0$, then $\psi(\varphi(t))\sim \alpha\varphi(t)$ and
$\pp_{x}\left(\zeta>t\right)=1-\e^{-x\varphi(t)}\sim x\varphi(t)$ as $t\to+\infty$.
Consequently, we have, for $\lambda\ge 0$,
$$
\frac{1}{\psi(\varphi(t))}\int_{0+}^{+\infty}\e^{-\lambda y}P_{t}(x,\mathrm{d}y)\sim \frac{x}{\alpha}\pp_{x}\left[\e^{-\lambda Z_{t}}|\zeta>t\right]\mbox{ as }t\to+\infty.
$$
Thus, if  $\alpha>0$ the result \eqref{them3.2.1}
is a reformulation of the limit \eqref{eq:yaglom limit}.
Now suppose $\alpha=0$. By Lemma \ref{lem3.1}, we have for any $\lambda>0$,
\begin{eqnarray*}
&&\lim_{t\to+\infty}\frac{1}{\psi(\varphi(t))}\int_{0+}^{+\infty} \e^{-\lambda y}P_t(x,\mathrm{d}y)
=\lim_{t\to+\infty}\frac{\e^{-x\varphi(t+\phi(\lambda))}-\e^{-x\varphi(t)}}{\psi(\varphi(t))}\nonumber\\
&=&\lim_{t\to+\infty}\frac{x(\varphi(t)-\varphi(t+\phi(\lambda)))}{\psi(\varphi(t))}
=x\phi(\lambda)=x\widehat{\mu}(\lambda).
\end{eqnarray*}
Hence we prove \eqref{them3.2.1}.
\end{proof}

Theorem \ref{thm con of semigroup} implies that the transition probability $P_t(x,\mathrm{d}y)$
constrained on $(0,\infty)$
is vaguely convergent
with rate $x\psi(\varphi(t))$ as $t\to+\infty$. In the following we shall
give concrete examples to illustrate the result of Theorem \ref{thm con of semigroup}.

\begin{exa}
Suppose $(Z_t)_{t\ge 0}$ is a subcritical CB process with $\psi'(0+)=\alpha>0$.
Let $\Theta$ be
a positive random variable
whose distribution is equal to
the Yaglom distribution $\nu_{\alpha}$.
By \cite[Lemma 2.1]{L}, $\mathrm{E}[\Theta]<+\infty$ if and only if
\begin{equation}\label{con2}
\int^{+\infty}r\ln r\pi(\mathrm{d}r)<+\infty,
\end{equation}
and in this case
$\varphi(t)\sim \frac{1}{\mathrm{E}[\Theta]}\e^{-\alpha t}$
as $t\to+\infty$.  Thus,
$$
\psi(\varphi(t))\sim \psi'(0+)\varphi(t)\sim  \frac{\alpha}{\mathrm{E}[\Theta]}\e^{-\alpha t}\quad\mbox{ as }t\to+\infty.
$$
Theorem \ref{thm con of semigroup} yields that for every $x>0$,
restricted on $(0,+\infty)$,
$$
\e^{\alpha t}P_t(x,\mathrm{d}y)
\mbox{ converges weakly to }\frac{x}{\mathrm{E}[\Theta]}\nu_{\alpha}(\mathrm{d}y)\mbox{ as }t\to+\infty.
$$
Otherwise if \eqref{con2} fails, then $\varphi(t)=o(\e^{-\alpha t})$ and thus $\psi(\varphi(t))=o(\e^{-\alpha t})$. Hence
$\e^{\alpha t}P_t(x,\mathrm{d}y)$
converges weakly to the null measure.
\end{exa}

\begin{exa}
Suppose $(Z_t)_{t\ge 0}$ is a critical CB process with branching mechanism $\psi$ given by
\begin{equation*}
\psi(\lambda)=\lambda^{1+p}L\left(1/\lambda\right),\qquad \lambda>0,
\end{equation*}
where $0<p\le 1$
and $L$ is a slowly varying function at $+\infty$.  For a slowly varying function $l$, it is known
(cf. \cite[Theorem 1.5.13]{BGT}) that there exists
a unique
(up to asymptotic equivalence) slowly varying function $l^{\#}$ such that $l(x)l^{\#}(xl(x))\to 1$ and $l^{\#}(x)l(xl^{\#}(x))\to 1$ as $x\to+\infty$. $l^{\#}$ is called the de Brujin conjugate of $l$.

For $z>0$, let $g(z):=\phi(1/z)=\int_{1/z}^{+\infty}\frac{1}{\psi(\lambda)}\mathrm{d}\lambda=\int^{z}_{0}\frac{u^{p-1}}{L(u)}\mathrm{d}u$. Since $p-1>-1$, by Karamata's theorem (cf.\cite[Theorem 1.5.11]{BGT}),
$$
g(z)\sim \frac{z^{p}}{p L(z)}\quad\mbox{ as }z\to +\infty.
$$
Note that $g$ is a strictly increasing function on $(0,+\infty)$. Let $g^{-1}$ be its inverse. It follows by \cite[Proposition 1.5.15]{BGT} that
$$
g^{-1}(z)\sim p z^{1/p}L^{\diamondsuit}(z^{1/p})^{1/p}\mbox{ as }z\to+\infty,
$$
where $L^{\diamondsuit}$
is the de Brujin conjugate of $1/L$. Recall that $\varphi(t)=\phi^{-1}(t)=1/g^{-1}(t)$. We get
$$
\varphi(t)\sim \frac{1}{p} t^{-1/p}L^{\diamondsuit}(t^{1/p})^{-1/p}\mbox{ as }t\to+\infty.
$$
We note that
$$
\varphi(t)=-\int_t^{+\infty}\varphi'(s)\mathrm{d}s=\int_t^{+\infty}\psi(\varphi(s))\mathrm{d}s.
$$
We also note that $\psi(\varphi(s))$ is a strictly decreasing function on $(0,+\infty)$. Hence by the monotone density theorem (cf. \cite[Theorem 1.7.2]{BGT}),
$$
\psi(\varphi(t))\sim \frac{1}{p^{2}}t^{-\left(\frac{1}{p}+1\right)}L^{\diamondsuit}(t^{1/p})^{-1/p}\mbox{ as }t\to+\infty.
$$
Therefore Theorem \ref{thm con of semigroup} yields that for every $x>0$,
$$
\frac{p^{2}}{x}t^{\frac{1}{p}+1}L^{\diamondsuit}(t^{1/p})^{1/p}P_{t}(x,\mathrm{d}y)\mbox{ converges vaguely to }\mu(\mathrm{d}y)\mbox{ as }t\to+\infty.
$$
\end{exa}

\bigskip

We put for every $x>0$ and Borel set $A\subset (0,+\infty)$,
$$G(x,A):=\int_{0}^{+\infty}\pp_{x}(Z_{t}\in A)\mathrm{d}t\in [0,+\infty],$$
and call the corresponding measure $G(x,\mathrm{d}y)$ on $(0,+\infty)$ the potential measure of $(Z_{t})_{t\ge 0}$.
Equation \eqref{them3.2.1} yields that,
if $\alpha\geq 0$ (subcritical or critical case), for every $x>0$ and $\lambda>0$,
\begin{equation}\label{eq3.12}
\int_{0+}^{+\infty}\e^{-\lambda y}P_t(x,\mathrm{d}y)\sim c_{\lambda}x\psi(\varphi(t))\mbox{ as }t\to +\infty,
\end{equation}
for some positive constant $c_{\lambda}$ depending on $\lambda$.
We note that $\varphi'(t)=-\psi(\varphi(t))$. Thus
$$
\int_1^{+\infty} \psi(\varphi(t))\mathrm{d}t=\varphi(1)-\varphi(\infty)=\varphi(1)<+\infty.
$$
Hence we deduce by \eqref{eq3.12} that
$\int_{0+}^{+\infty}\e^{-\lambda y}G(x,\mathrm{d}y)
=\int_{0}^{+\infty}\int_{0+}^{+\infty}\e^{-\lambda y}P_t(x,\mathrm{d}y)\mathrm{d}t<+\infty$
for every $x>0$.  This implies that
$G(x,B)<+\infty$
for every compact subset $B\subset (0,+\infty)$.
Thus the potential measure for the CB process $(Z_t)_{t\ge 0}$ is a locally finite measure on $(0,+\infty)$.

\begin{thm}\label{thm con of green}
The potential measure $G(x,\mathrm{d}y)$
of $(Z_t)_{t\ge 0}$
has a density with respect to the Lebesgue measure given by
\begin{equation}
\nonumber
g(x,y)=\frac{W(y)-W(y-x)}{y}
\end{equation}
for $x,y>0$.
Moreover, $G(x,\mathrm{d}y)$ converges vaguely to the stationary measure $\mu(\mathrm{d}y)$ as $x\to+\infty$.
\end{thm}
\begin{proof}
Suppose $(X_t)_{t\ge 0}$ is the spectrally positive L\'{e}vy process associated with the CB process $(Z_t)_{t\ge 0}$ through the Lamperti transform. Then we have for $x>0$ and $\lambda>0$,
\begin{eqnarray}\label{them3.5.1}
\int_{0+}^{+\infty}\e^{-\lambda y}G(x,\mathrm{d}y)
&=&\pp_x\left[\int_{0}^{\zeta}\e^{-\lambda Z_t}\mathrm{d}t\right]\nonumber\\
&=&\p_x\left[\int_0^{\tau^-_0}\e^{-\lambda X_s}\frac{1}{X_s}\mathrm{d}s\right].
\end{eqnarray}
The final equality follows from a change of variables. Let $U(x,\mathrm{d}y)$ be the potential measure of $X$ killed on exiting $[0,+\infty)$ when issued from $x>0$, that is
$$
U(x,\mathrm{d}y)=\int_0^{+\infty}\p_x\left(X_t\in \mathrm{d}y,\ t<\tau^-_0\right)\mathrm{d}t\quad\mbox{ for }\, y>0.
$$
It follows by \eqref{them3.5.1} that
\begin{equation}\label{them3.5.2}
G(x,\mathrm{d}y)=\frac{1}{y} U(x,\mathrm{d}y)\quad\mbox{ for }x,y>0.
\end{equation}
It is proved in \cite[Theorem 2.7]{KKR} that $U(x,\mathrm{d}y)$ has a potential density with respect to the Lebesgue measure given by
\begin{equation}\label{eq: u}
u(x,y)=\e^{-\gamma x}W(y)-W(y-x),\qquad x,y>0.
\end{equation}
Here $\gamma=0$ since $\psi'(0+)\ge 0$.
Putting this back to \eqref{them3.5.2}, we prove the first assertion.

We note that for $\lambda>0$,
\begin{eqnarray*}
\int_{0+}^{+\infty}\e^{-\lambda y}G(x,\mathrm{d}y)
&=&\int_{0}^{+\infty}\e^{-\lambda y}\,\frac{W(y)-W(y-x)}{y}\,\mathrm{d}y\\
&=&\int_{0+}^{+\infty}\e^{-\lambda y}\mu(\mathrm{d}y)-\int_x^{+\infty}\e^{-\lambda y}\frac{W(y-x)}{y}\,\mathrm{d}y.
\end{eqnarray*}
By change of variables, the second integral in the right hand side equals
$\e^{-\lambda x}\int_{0}^{+\infty}\e^{-\lambda z}\frac{W(z)}{x+z}\mathrm{d}z$, which converges to $0$ as $x\to+\infty$.
Hence we get
$$
\lim_{x\to+\infty}\int_{0+}^{+\infty}\e^{-\lambda y}G(x,\mathrm{d}y)=\widehat{\mu}(\lambda),
$$
for all $\lambda>0$. Hence we prove the second assertion.
\end{proof}

\begin{remark}
We remark that \eqref{eq: u} holds indeed for $\gamma\ge 0$.
Thus for a supercritical CB process, applying similar argument with minor modification,
we can show that the potential density function exists and is given by
$$
g(x,y)=\e^{-\gamma x}\frac{W(y)}{y}-\frac{W(y-x)}{y},
$$
for $x,y>0$.
\end{remark}

A natural question is under what condition,
$G(x,\mathrm{d}y)$ is a finite measure on $(0,+\infty)$.  We give the following equivalent statements.

\begin{prop}\label{prop:potential}
The following statements are equivalent:
\begin{itemize}
\item[(i)]  $G(x,\mathrm{d}y)$ is a finite measure on $(0,+\infty)$ for some (and then all) $x>0$.
\item[(ii)]
 $ \pp_x[\zeta]<+\infty$ for some (and then all) $x>0$.
\item[(iii)] The branching mechanism $\psi$ satisfies that
\begin{equation}\label{finiteG}
\int_{0+}\frac{u}{\psi(u)}\mathrm{d}u<+\infty.
\end{equation}
\end{itemize}
\end{prop}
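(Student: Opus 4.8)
The plan is to establish the three equivalences through the chain (i) $\Leftrightarrow$ (ii) $\Leftrightarrow$ (iii), the pivot being that the total mass of the potential measure is exactly the mean extinction time.

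First I would prove (i) $\Leftrightarrow$ (ii). Since $0$ is absorbing we have $\{Z_t>0\}=\{\zeta>t\}$, so by Tonelli's theorem the total mass of $G(x,\cdot)$ is
$$
G(x,(0,+\infty))=\int_0^{+\infty}\pp_x(Z_t>0)\,dt=\int_0^{+\infty}\pp_x(\zeta>t)\,dt=\pp_x[\zeta].
$$
Because $G(x,dy)$ is already known to be locally finite on $(0,+\infty)$ (from the discussion preceding Theorem \ref{thm con of green}), it is a finite measure precisely when this total mass is finite, which is statement (ii). This settles (i) $\Leftrightarrow$ (ii) in one line.

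Next I would prove (ii) $\Leftrightarrow$ (iii) by computing $\pp_x[\zeta]$ explicitly. From \eqref{prob of extintion at t}, $\pp_x(\zeta>t)=1-\e^{-x\varphi(t)}$, so $\pp_x[\zeta]=\int_0^{+\infty}(1-\e^{-x\varphi(t)})\,dt$. The contribution of any bounded interval is finite (the integrand is bounded by $1$), so finiteness is governed by the tail as $t\to+\infty$. There $\varphi(t)\to 0$, and the elementary bounds $\tfrac12 a\le 1-\e^{-a}\le a$ (valid for $a\le 1$) show that $1-\e^{-x\varphi(t)}$ is squeezed between constant multiples of $x\varphi(t)$; hence $\pp_x[\zeta]<+\infty$ iff $\int^{+\infty}\varphi(t)\,dt<+\infty$, a condition that does not involve $x$ and thereby explains the ``some (and then all)'' clauses in (i) and (ii). Finally, using $\varphi'(t)=-\psi(\varphi(t))$ I would substitute $u=\varphi(t)$ to obtain
$$
\int_T^{+\infty}\varphi(t)\,dt=\int_0^{\varphi(T)}\frac{u}{\psi(u)}\,du,
$$
and since $u/\psi(u)$ is continuous and positive on $(0,\varphi(T)]$, this integral is finite iff $\int_{0+}\frac{u}{\psi(u)}\,du<+\infty$, which is (iii).

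The argument is essentially computational, and I do not expect a serious obstacle. The only step needing a little care is the change of variables, which relies on $\varphi$ being a strictly decreasing $C^1$ bijection of $(0,+\infty)$ onto itself solving $\varphi'=-\psi\circ\varphi$; this follows by differentiating the defining relation $\phi(\varphi(t))=t$ together with $\phi'(\lambda)=-1/\psi(\lambda)$. One should also confirm that the behaviour near $t=0$, where $\varphi(t)\to+\infty$, contributes nothing to divergence, but since there the integrand $1-\e^{-x\varphi(t)}$ stays below $1$, this is automatic.
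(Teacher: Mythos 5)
Your proposal is correct and follows essentially the same route as the paper: the identity $G(x,(0,+\infty))=\pp_x[\zeta]$ via Tonelli/Fubini for (i) $\Leftrightarrow$ (ii), and the reduction of $\int_0^{+\infty}\bigl(1-\e^{-x\varphi(t)}\bigr)dt<+\infty$ to $\int^{+\infty}\varphi(t)\,dt<+\infty$ followed by the substitution $u=\varphi(t)$ for the equivalence with (iii). Your explicit two-sided bound on $1-\e^{-a}$ and the remark about the behaviour near $t=0$ merely spell out details the paper compresses into the asymptotic $1-\e^{-x\varphi(t)}\sim x\varphi(t)$.
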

\begin{proof}
(i)$\Longleftrightarrow$(ii): By Fubini's theorem, we have for every $x>0$,
\begin{equation}\label{integral of Green function}
\int_{0+}^{+\infty} G(x,\mathrm{d}y)=\int_0^{+\infty} \mathrm{d}t\int_{0+}^{+\infty} P_t(x,\mathrm{d}y)=\int_0^{+\infty} \pp_x(\zeta>t) \mathrm{d}t=\pp_x[\zeta].
\end{equation}
Hence (i) and (ii) are equivalent.

(i)$\Longleftrightarrow$(iii):
 We have for every $x>0$,
\[
\int_{0+}^{+\infty} G(x,\mathrm{d}y)=\int_0^{+\infty} \pp_x(\zeta>t) \mathrm{d}t=\int_0^{+\infty}\left(1-\e^{-x\varphi(t)}\right)\mathrm{d}t.
\]
Since $\varphi(t)\to 0$ as $t\to+\infty$, we have $1-\e^{-x\varphi(t)}\sim x\varphi(t)$ as $t\to+\infty$.
Hence the final integral is finite if and only if $\int^{+\infty} \varphi(t)\mathrm{d}t<+\infty$.  Substituting $t$ by $\phi(s)$ in the integral
$\int^{+\infty} \varphi(t)\mathrm{d}t$,
we can deduce that $\int^{+\infty}\varphi(t)\mathrm{d}t<+\infty$ if and only if
\[
-\int_{0+}s\,\mathrm{d}\phi(s)=\int_{0+}\dfrac{s}{\psi(s)}\mathrm{d}s<+\infty.
\]
\end{proof}
We will also classify the finiteness of $G(x,\mathrm{d}y)$ through the L\'{e}vy measure $\pi$.
\begin{cor}
If $\alpha>0$, then $G(x,\mathrm{d}y)$ is a finite measure on $(0,+\infty)$ for every $x>0$.
If $\alpha=0$, then $G(x,\mathrm{d}y)$ is finite on $(0,+\infty)$ for some (then all) $x>0$ if and only if
\begin{equation}\label{finiteG2}
\int^{+\infty}\frac{1}{s\int_0^s\bar{\bar{\pi}}(r)\mathrm{d}r}\mathrm{d}s<+\infty,
\end{equation}
where for $r\ge 0$, $\bar{\pi}(r):=\int_r^{+\infty}\pi(\mathrm{d}y)$ and $\bar{\bar{\pi}}(r):=\int_r^{+\infty}\bar{\pi}(y)\mathrm{d}y$, or equivalently,
\begin{equation}\label{finiteG3}
\int^{+\infty}\frac{1}{s\int_0^sr^2\pi(\mathrm{d}r)+s^2\int_s^{+\infty} r\pi(\mathrm{d}r)}\mathrm{d}s<+\infty.
\end{equation}
\end{cor}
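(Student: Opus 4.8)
The plan is to reduce everything to the analytic criterion furnished by Proposition \ref{prop:potential}, which says that $G(x,dy)$ is finite (for some, equivalently all, $x>0$) if and only if $\int_{0+}\frac{u}{\psi(u)}\,du<+\infty$. The whole statement thus becomes a question about the behaviour of $\psi$ near $0$. The case $\alpha>0$ is immediate: since $\psi(u)/u\to\psi'(0+)=\alpha>0$ as $u\to 0+$, the integrand $u/\psi(u)$ tends to $1/\alpha$ and is bounded on a neighbourhood of $0$, so the integral converges and $G(x,dy)$ is finite for every $x>0$.

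For $\alpha=0$ I would first rewrite the branching mechanism in terms of the iterated tails. Using $\e^{-ur}-1+ur=\int_0^r u(1-\e^{-uy})\,dy$ and Tonelli's theorem twice, one obtains the identity
\[
\psi(u)=\tfrac12\sigma^2u^2+u^2\int_0^{+\infty}\e^{-uz}\bar{\bar{\pi}}(z)\,dz,\qquad u>0,
\]
so that $u/\psi(u)=\big[u\,(\tfrac12\sigma^2+I(u))\big]^{-1}$ with $I(u):=\int_0^{+\infty}\e^{-uz}\bar{\bar{\pi}}(z)\,dz$. The crux is then to compare this Laplace transform with the truncated integral $J(1/u):=\int_0^{1/u}\bar{\bar{\pi}}(z)\,dz$. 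Since $\bar{\bar{\pi}}$ is nonincreasing, restricting the integral to $(0,1/u)$ gives the lower bound $I(u)\ge \e^{-1}J(1/u)$, while splitting $(0,+\infty)$ into consecutive blocks of length $1/u$ and using monotonicity on each block gives $I(u)\le(1-\e^{-1})^{-1}J(1/u)$. Hence $\tfrac12\sigma^2+I(u)$ is comparable, up to universal constants, to $\tfrac12\sigma^2+J(1/u)$.

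With this comparison in hand I would substitute $s=1/u$ in $\int_{0+}\frac{u}{\psi(u)}\,du$ to obtain that it converges if and only if $\int^{+\infty}\frac{ds}{s\,(\tfrac12\sigma^2+J(s))}<+\infty$. Removing the Brownian term needs a short case split: if $\int r^2\pi(dr)<+\infty$ then $J(s)$ stays bounded and both this integral and \eqref{finiteG2} diverge (so $G$ is infinite), whereas if $\int r^2\pi(dr)=+\infty$ then $J(s)\to+\infty$, the additive constant $\tfrac12\sigma^2$ is asymptotically negligible, and the two integrals converge or diverge together. In either case the criterion is exactly \eqref{finiteG2}. Finally, to pass to the equivalent form \eqref{finiteG3}, I would evaluate the tail integral by Fubini,
\[
\int_0^s\bar{\bar{\pi}}(r)\,dr=\tfrac12\int_0^s r^2\pi(dr)+s\int_s^{+\infty} r\pi(dr)-\tfrac12 s^2\bar{\pi}(s),
\]
and control the last term by the elementary bound $s^2\bar{\pi}(s)\le s\int_s^{+\infty} r\pi(dr)$, valid because $r/s\ge1$ on $\{r\ge s\}$. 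This sandwiches $\int_0^s\bar{\bar{\pi}}(r)\,dr$ between $\tfrac12$ and $1$ times $\int_0^s r^2\pi(dr)+s\int_s^{+\infty} r\pi(dr)$, so the denominators in \eqref{finiteG2} and \eqref{finiteG3} are comparable and the two conditions are equivalent.

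The main obstacle I expect is the two-sided comparison of $I(u)$ with $J(1/u)$ — essentially a Tauberian-type estimate for the Laplace transform of the decreasing function $\bar{\bar{\pi}}$ — together with the careful bookkeeping of the Brownian coefficient $\sigma$, which must be shown to be asymptotically irrelevant precisely in the regime where $J(s)$ diverges; the remaining steps are routine applications of Fubini's theorem and monotonicity.
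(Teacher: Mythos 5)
Your proof is correct and follows essentially the same route as the paper's: reduce to the criterion $\int_{0+}u/\psi(u)\,du<+\infty$ from Proposition \ref{prop:potential}, dispose of $\alpha>0$ by $\psi(u)\sim\alpha u$, rewrite $\psi$ for $\alpha=0$ through the iterated tail $\bar{\bar{\pi}}$, compare the resulting Laplace transform with the truncated integral $\int_0^{1/u}\bar{\bar{\pi}}(z)\,dz$, remove the $\tfrac12\sigma^2$ term via the same dichotomy on $\int r^2\pi(dr)$, and establish the equivalence of \eqref{finiteG2} and \eqref{finiteG3} by the identical Fubini computation and the bound $s^2\bar{\pi}(s)\le s\int_s^{+\infty}r\pi(dr)$. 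The only difference is that where the paper cites \cite[Proposition III.1]{Bertoin} for the two-sided estimate $\int_0^{+\infty}\e^{-uz}\bar{\bar{\pi}}(z)\,dz\asymp\int_0^{1/u}\bar{\bar{\pi}}(z)\,dz$, you prove it directly by an elementary block decomposition, which is a valid self-contained substitute.
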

\begin{proof}
 If $\alpha>0$, then $u/\psi(u)\sim 1/\alpha$ as $u\to 0$, and \eqref{finiteG} holds immediately. Now we suppose $\alpha=0$. In this case
\begin{eqnarray*}
\frac{\psi(\lambda)}{\lambda}
&=&\frac{1}{2}\sigma^2\lambda+\frac{1}{\lambda}\int_0^{+\infty} (\e^{-\lambda r}-1+\lambda r)\pi(\mathrm{d}r)\\
&=&\frac{1}{2}\sigma^2\lambda+\int_{0}^{+\infty}\left(1-\e^{-\lambda r}\right)\bar{\pi}(r)\mathrm{d}r
\end{eqnarray*}
for $\lambda>0$. Obviously $\psi(\lambda)/\lambda$ is the Laplace exponent of a L\'{e}vy subordinator. Thus by \cite[Proposition III.1]{Bertoin},
$$
\frac{\psi(\lambda)}{\lambda}\asymp \lambda\left(\frac{1}{2}\sigma^{2}+\int_{0}^{1/\lambda}\bar{\bar{\pi}}(r)\mathrm{d}r\right).
$$
Consequently we have
$$
\int_{0+}\frac{u}{\psi(u)}\mathrm{d}u\asymp \int_{0+}\frac{1}{u}\cdot\frac{1}{\frac{1}{2}\sigma^{2}+\int_{0}^{1/u}\bar{\bar{\pi}}(r)\mathrm{d}r}\mathrm{d}u.
$$
By change of variables, the integral in the right hand side equals $\int^{+\infty}\frac{1}{s\left(\frac{1}{2}\sigma^{2}+\int_{0}^{s}\bar{\bar{\pi}}(r)\mathrm{d}r\right)}\mathrm{d}s$.
If $\int_0^{+\infty}\bar{\bar{\pi}}(r)\mathrm{d}r<+\infty$, then the latter integral equals $+\infty$ and \eqref{finiteG} fails. Otherwise if $\int_0^{+\infty}\bar{\bar{\pi}}(r)\mathrm{d}r=+\infty$, then
$\frac{1}{\frac{1}{2}\sigma^{2}+\int_0^s\bar{\bar{\pi}}(r)\mathrm{d}r}\sim \frac{1}{\int_0^s\bar{\bar{\pi}}(r)\mathrm{d}r}$ as $s\to +\infty$,
and \eqref{finiteG} holds if and only if \eqref{finiteG2} holds. Next, we prove the equivalence of \eqref{finiteG2} and \eqref{finiteG3}.
For any $s>0$, by exchanging the order of integration, we obtain that
\begin{eqnarray*}
&&\int_0^s\bar{\bar{\pi}}(r)\mathrm{d}r=\int_0^{+\infty} \pi(\mathrm{d}r)\int_0^r(u\wedge s)\mathrm{d}u\\
&=&\frac{1}{2}\int_0^s r^2\pi(\mathrm{d}r)+s\int_s^{+\infty} r\pi(\mathrm{d}r)-\frac{s^2}{2}\bar{\pi}(s).
\end{eqnarray*}
Note that $0\leq \bar{\pi}(s)\leq \dfrac{\int_s^{+\infty} r\pi(\mathrm{d}r)}{s}$.  These deduce the following inequalities
\[
\frac{1}{2}\int_0^s r^2\pi(\mathrm{d}r)+\frac{s}{2}\int_s^{+\infty} r\pi(\mathrm{d}r)\leq \int_0^s\bar{\bar{\pi}}(r)\mathrm{d}r
\leq \dfrac{1}{2}\int_0^s r^2\pi(\mathrm{d}r)+s\int_s^{+\infty} r\pi(\mathrm{d}r).
\]
Or it can be expressed as
\[
\int_0^s\bar{\bar{\pi}}(r)\mathrm{d}r\asymp \int_0^s r^2\pi(\mathrm{d}r)+s\int_s^{+\infty} r\pi(\mathrm{d}r).
\]
And the equivalence of \eqref{finiteG2} and \eqref{finiteG3} is obtained.
\end{proof}
From this result, we can see that if the critical CB process has finite variance, i.e. $\int_1^{+\infty} r^2\pi(\mathrm{d}r)<+\infty$, then $\pp_x[\zeta]=+\infty$ for every $x>0$, though $\pp_x(\zeta<+\infty)=1$.
However, if the right tail of the L\'evy measure $\pi$ of the critical CB process is
heavy enough,
for example, $\pi(\mathrm{d}r)=r^{-(2+p)}\mathrm{d}r$
for some $p\in (0,1)$, then the expectation of $\zeta$ is finite.

\section{CB process conditioned on extinction}\label{sec:IV}
\subsection{Existence of conditional limits}\label{sec4.1}
\begin{lem}\label{lem for U}
For any $s>0$, set
\begin{equation}\label{def of u}
\mu_s(\mathrm{d}x):=\e^{-\varphi(s)x}\frac{W(x)}{s x}\mathrm{d}x
\end{equation}
for $x>0$. Then $\mu_s$ is a probability measure on $(0,+\infty)$ with
$$
\widehat{\mu}_s(\lambda)=\frac{\phi(\lambda+\varphi({s}))}{{s}},\qquad \lambda>0.
$$
Moreover, $\mu_s$ is the size-biased stationary measure given by
$$
\mu_s(\mathrm{d}x)=\frac{\e^{-\varphi(s)x}\mu(\mathrm{d}x)}{\int_{0}^{+\infty}\e^{-\varphi(s)r}\mu(\mathrm{d}r)}.
$$
\end{lem}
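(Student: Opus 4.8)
The plan is to establish all three assertions at once by computing the Laplace transform $\widehat{\mu}_s$ directly from the defining density and then reading off the normalization and the size-biased form from Lemma \ref{lem for stationary measure}, which identifies $\widehat{\mu}(\lambda)=\phi(\lambda)$. First I would compute, for $\lambda>0$,
\begin{equation*}
\widehat{\mu}_s(\lambda)=\int_{0+}^{+\infty}\e^{-\lambda x}\,\e^{-\varphi(s)x}\,\frac{W(x)}{sx}\,dx
=\frac{1}{s}\int_{0+}^{+\infty}\e^{-(\lambda+\varphi(s))x}\,\frac{W(x)}{x}\,dx.
\end{equation*}
The remaining integral is precisely $\widehat{\mu}(\lambda+\varphi(s))$ in the notation of Lemma \ref{lem for stationary measure}. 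Since $\gamma=0$ in the (sub)critical case, we have $\varphi(s)>0$ for every $s>0$, so $\lambda+\varphi(s)>0$ lies in the domain covered by that lemma, and it yields $\widehat{\mu}_s(\lambda)=\phi(\lambda+\varphi(s))/s$, which is the claimed Laplace transform.

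Next, to see that $\mu_s$ is a probability measure, I would let $\lambda\downarrow 0$. On one hand $\e^{-\lambda x}\uparrow 1$, so by monotone convergence $\widehat{\mu}_s(\lambda)\uparrow\mu_s\big((0,+\infty)\big)$; on the other hand $\phi$ is continuous at the point $\varphi(s)>0$, so $\widehat{\mu}_s(\lambda)\to\phi(\varphi(s))/s$. Because $\varphi$ is the inverse of $\phi$, we have $\phi(\varphi(s))=s$, and therefore $\mu_s\big((0,+\infty)\big)=1$.

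Finally, for the size-biased representation I would compute the normalizing constant using Lemma \ref{lem for stationary measure} once more: $\int_{0+}^{+\infty}\e^{-\varphi(s)r}\mu(dr)=\widehat{\mu}(\varphi(s))=\phi(\varphi(s))=s$. Since $\mu(dx)=W(x)/x\,dx$ by \eqref{def-mu}, dividing $\e^{-\varphi(s)x}\mu(dx)$ by $s$ reproduces exactly the defining density $\e^{-\varphi(s)x}W(x)/(sx)\,dx$ of $\mu_s$, which establishes the stated identity.

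The computation is essentially routine, so the only step requiring genuine care is the passage to $\lambda=0$ used to pin down the total mass. There one must invoke the strict positivity $\varphi(s)>0$ (equivalently $\gamma=0$ in the critical and subcritical regimes) so that the argument of $\phi$ stays bounded away from $0$, where $\phi$ diverges; without this the monotone-convergence limit and the continuity of $\phi$ would not combine to give the finite value $\phi(\varphi(s))/s=1$.
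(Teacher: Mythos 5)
Your proposal is correct and follows essentially the same route as the paper: both reduce everything to the identity $\widehat{\mu}(\lambda)=\phi(\lambda)$, the paper by repeating the Fubini computation of Lemma \ref{lem for stationary measure} inline (which is directly valid at $\lambda=0$ since $\varphi(s)>0$), you by citing that lemma and recovering the total mass via a monotone-convergence limit as $\lambda\downarrow 0$. Both are valid, and your care about $\varphi(s)>0$ keeping the argument of $\phi$ away from its singularity at $0$ is exactly the right point to watch.
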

\begin{proof}
By \eqref{Laplace of scale function} and Fubini's theorem, we have for $\lambda\ge 0$,
\begin{eqnarray*}
\phi(\lambda+\varphi(s))&=&\int_{\lambda+\varphi(s)}^{+\infty}\frac{1}{\psi(u)}\mathrm{d}u
=\int_{\lambda+\varphi(s)}^{+\infty} \mathrm{d}u\int_0^{+\infty} \e^{-ux}W(x)\mathrm{d}x\\
&=&\int_0^{+\infty} W(x) \mathrm{d}x \int_{\lambda+\varphi(s)}^{+\infty} \e^{-ux} \mathrm{d}u \\
&=&\int_0^{+\infty} \e^{-(\lambda+\varphi(s))x}\frac{W(x)}{x} \mathrm{d}x \\
&=&s\int_0^{+\infty}\e^{-\lambda x}\mu_s(\mathrm{d}x).
\end{eqnarray*}
In particularly if $\lambda=0$,
$\int_0^{+\infty} \mu_s(\mathrm{d}x)=\phi(\varphi(s))/s=1$.  It follows that
$\mu_s(\mathrm{d}x)$ is a probability measure on $(0,+\infty)$.
The second assertion follows immediately by
observing that
$\int_0^{+\infty}\e^{-\varphi(s)x}\mu(\mathrm{d}x)=s$.
\end{proof}

Recall that $\Theta$ is
a random variable
distributed as Yaglom distribution $\nu_\alpha$. Then its Laplace function is given by
\begin{equation}\label{Lap of Theta}
\mathrm{E}\left[\e^{-\lambda \Theta}\right]=1-\e^{-\alpha\phi(\lambda)}, \qquad \lambda>0.
\end{equation}
The following result establishes the limit distribution of CB process conditioned on extinction in the near future.

\begin{thm}\label{thm:conditioned on near extinction}
For any $s>0$, there is a positive random variable $W_s$ such that for any $\lambda,x>0$,
\[
\lim_{t\to+\infty}\pp_x\left[\left.\e^{-\lambda Z_t}\right|t\le \zeta<t+s\right]=\mathrm{E}\left(\e^{-\lambda W_s}\right)
=\begin{cases}
\dfrac{1-\e^{-\alpha\phi(\lambda+\varphi(s))}}{1-\e^{-\alpha s}},& \, \alpha>0;\\
\\
\dfrac{\phi(\lambda+\varphi(s))}{s},&\, \alpha=0.
\end{cases}
\]
In particular, if $\alpha=0$, then $W_s$ has the distribution $\p (W_s\in \mathrm{d}r)=\mu_s(\mathrm{d}r)$, where
$\mu_s$ is the size-biased stationary measure
defined in \eqref{def of u}.
Otherwise if $\alpha>0$,
then $W_s$ has the size-biased Yaglom distribution
\begin{equation}\label{them4.2.1}
\p(W_s\in \mathrm{d}r)=\frac{\e^{-\varphi(s)r}\p(\Theta\in \mathrm{d}r)}{\mathrm{E}\left[\e^{-\varphi(s)\Theta}\right]}.
\end{equation}
\end{thm}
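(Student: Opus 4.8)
The plan is to compute the conditional Laplace transform explicitly as a ratio of two differences of exponentials, and then resolve the resulting indeterminate form by invoking Lemma \ref{lem3.1}. First I would write, for fixed $x,\lambda,s>0$,
\[
\pp_x\left[\e^{-\lambda Z_t}\,\big|\,t\le\zeta<t+s\right]=\frac{\pp_x\left[\e^{-\lambda Z_t};\,t\le\zeta<t+s\right]}{\pp_x(t\le\zeta<t+s)}.
\]
For the denominator, since $\pp_x(\zeta\le t)=\e^{-x\varphi(t)}$ by \eqref{prob of extintion at t} and $t\mapsto\varphi(t)$ is continuous, we get $\pp_x(t\le\zeta<t+s)=\e^{-x\varphi(t+s)}-\e^{-x\varphi(t)}$. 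For the numerator I would condition on $\mathcal F_t$ and apply the Markov property: on $\{Z_t>0\}$, which agrees with $\{\zeta\ge t\}$ up to a $\pp_x$-null set, the conditional probability of $\{\zeta<t+s\}$ equals $\pp_{Z_t}(\zeta<s)=\e^{-\varphi(s)Z_t}$, whence
\[
\pp_x\left[\e^{-\lambda Z_t};\,t\le\zeta<t+s\right]=\pp_x\left[\e^{-(\lambda+\varphi(s))Z_t};\,Z_t>0\right]=\e^{-xu_t(\lambda+\varphi(s))}-\e^{-x\varphi(t)},
\]
using \eqref{eq:log-laplace} and $\pp_x(Z_t=0)=\e^{-x\varphi(t)}$. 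Since $\gamma=0$, relation \eqref{rela of u and varphi} turns the exponent into $u_t(\lambda+\varphi(s))=\varphi\big(t+\phi(\lambda+\varphi(s))\big)$.

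Next, both numerator and denominator tend to $0$ as $t\to+\infty$ because $\varphi(t)\to0$, so the ratio is an indeterminate $0/0$ form. I would resolve it by dividing top and bottom by $\psi(\varphi(t))$. Using $\e^{-xa}-\e^{-xb}=\e^{-xb}(\e^{-x(a-b)}-1)\sim x(b-a)$ whenever $a,b\to0$, the normalized numerator is asymptotic to $x\cdot\frac{\varphi(t)-\varphi(t+\phi(\lambda+\varphi(s)))}{\psi(\varphi(t))}$ and the normalized denominator to $x\cdot\frac{\varphi(t)-\varphi(t+s)}{\psi(\varphi(t))}$. The crucial observation for the denominator is that $\phi(\varphi(s))=s$, so its shift $s$ equals $\phi$ evaluated at $\varphi(s)$; this lets Lemma \ref{lem3.1} apply with $\lambda$ replaced by $\varphi(s)$.

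Applying Lemma \ref{lem3.1} to both expressions and cancelling the common factor $x$, I obtain for $\alpha>0$ the limit $\frac{1-\e^{-\alpha\phi(\lambda+\varphi(s))}}{1-\e^{-\alpha s}}$ (again via $\phi(\varphi(s))=s$) and for $\alpha=0$ the limit $\phi(\lambda+\varphi(s))/s$, which are exactly the claimed formulas. It then remains to identify the limit as a genuine Laplace transform. In the critical case it equals $\widehat{\mu}_s(\lambda)$ by Lemma \ref{lem for U}, so $W_s$ has law $\mu_s$ from \eqref{def of u}. In the subcritical case, substituting $\mathrm E[\e^{-\mu\Theta}]=1-\e^{-\alpha\phi(\mu)}$ from \eqref{Lap of Theta} into the size-biased law \eqref{them4.2.1} yields $\mathrm E[\e^{-\lambda W_s}]=\mathrm E[\e^{-(\lambda+\varphi(s))\Theta}]/\mathrm E[\e^{-\varphi(s)\Theta}]=\frac{1-\e^{-\alpha\phi(\lambda+\varphi(s))}}{1-\e^{-\alpha s}}$, matching the limit.

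The computation is essentially routine once the Markov-property reduction is in place; the only genuine subtlety is the resolution of the $0/0$ form, which hinges on recognising that both differences are controlled by Lemma \ref{lem3.1} after the single normalisation by $\psi(\varphi(t))$, together with the identity $\phi(\varphi(s))=s$ that brings the denominator under the same lemma.
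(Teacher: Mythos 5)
Your proposal is correct, and it follows the paper's proof almost exactly in its first half: the same Markov-property reduction gives the same explicit ratio
$\bigl(\e^{-x\varphi(t+\phi(\lambda+\varphi(s)))}-\e^{-x\varphi(t)}\bigr)/\bigl(\e^{-x\varphi(t+s)}-\e^{-x\varphi(t)}\bigr)$,
and the identification of the limit as $\widehat{\mu}_s(\lambda)$ (via Lemma \ref{lem for U}) in the critical case and as the Laplace transform of the size-biased Yaglom law (via \eqref{Lap of Theta}) in the subcritical case is also the paper's argument. Where you diverge is in resolving the $0/0$ form: you linearize both exponential differences and then apply Lemma \ref{lem3.1} twice, once with $\lambda+\varphi(s)$ and once with $\varphi(s)$ (using $\phi(\varphi(s))=s$ to put the denominator in the lemma's form), which handles both the critical and subcritical cases uniformly. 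The paper instead treats the two cases separately and does not cite Lemma \ref{lem3.1} here: for $\alpha=0$ it writes each difference as $\int_0^{a} x\e^{-x\varphi(t+u)}\psi(\varphi(t+u))\,du$ and applies the mean value theorem for integrals together with \eqref{ratio limit}, and for $\alpha>0$ it linearizes and uses \eqref{regular varying} directly. Since Lemma \ref{lem3.1} is itself proved from exactly those two ingredients, the underlying mathematics is identical; your organization is a clean streamlining that avoids re-deriving the critical-case asymptotics, at the cost of leaning on a lemma the paper had reserved for Theorem \ref{thm con of semigroup}. All the supporting details you rely on (positivity of $\varphi(s)$ so the lemma applies, the null set $\{\zeta=t\}$ separating $\{Z_t>0\}$ from $\{\zeta\ge t\}$, strict positivity of both limiting quantities so the ratio of limits is legitimate) check out.
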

\begin{proof}
It follows from the Markov property of $(Z_t)_{t\ge 0}$ that
\begin{eqnarray*}
\pp_x\left[\left.\e^{-\lambda Z_t}\right|t\le \zeta<t+s\right]=\frac{\pp_x\left[\e^{-\lambda Z_t}{\rm I}_{\{\zeta\ge t\}}
\pp_{Z_t}(\zeta<s)\right]}{\pp_x(\zeta<s+t)-\pp_x(\zeta<t)}.
\end{eqnarray*}
Taking use of \eqref{prob of extintion at t} and \eqref{rela of u and varphi},
we obtain that
\begin{equation}\label{conditioned expression}
\pp_x\left[\left.\e^{-\lambda Z_t}\right|t\le \zeta<t+s\right]
=\frac{\pp_x\left[\e^{-(\lambda+\varphi(s))Z_t}{\rm I}_{\{\zeta\ge t\}}\right]}{\e^{-x\varphi(t+s)}-\e^{-x\varphi(t)}}
=\frac{\e^{-x\varphi(t+\phi(\lambda+\varphi(s)))}-\e^{-x\varphi(t)}}{\e^{-x\varphi(t+s)}-\e^{-x\varphi(t)}}.
\end{equation}
When $\alpha=0$, since $\lim_{t\to+\infty}\varphi(t)=0$ and
$\varphi'(\lambda)=-\psi(\varphi(\lambda))$,
by the mean value theorem for integral,
\begin{eqnarray*}
&&\lim_{t\to+\infty}\pp_x\left[\left.\e^{-\lambda Z_t}\right|t\le \zeta<t+s\right]\nonumber\\
&=&\lim_{t\to+\infty}\frac{\int_0^{\phi(\lambda+\varphi(s))}\e^{-x\varphi(t+u)}\psi(\varphi(t+u))\mathrm{d}u}{\int_0^s\e^{-x\varphi(t+u)}\psi(\varphi(t+u))\mathrm{d}u}\\
&=&\lim_{t\to +\infty}
\frac{\e^{-x\varphi(t+\xi_{t,\phi(\lambda+\varphi(s))})}\psi(\varphi(t+\xi_{t,\phi(\lambda+\varphi(s))}))}
{\e^{-x\varphi(t+\xi_{t,s})}\psi(\varphi(t+\xi_{t,s}))}\dfrac{\phi(\lambda+\varphi(s))}{s},
\end{eqnarray*}
where $0<\xi_{t,\phi(\lambda+\varphi(s))}<\phi(\lambda+\varphi(s))$ and $0<\xi_{t,s}<s$.
Applying \eqref{ratio limit}, we obtain that
\[
\lim_{t\to +\infty}\frac{\psi(\varphi(t+\xi_{t,\phi(\lambda+\varphi(s))}))}{\psi(\varphi(t+\xi_{t,s}))}=1.
\]
So from Lemma \ref{lem for U}, for all $\lambda>0$,
\begin{eqnarray*}
\lim_{t\to+\infty}\pp_x\left[\left.\e^{-\lambda Z_t}\right|t\le \zeta<t+s\right]=\frac{\phi(\lambda+\varphi(s))}{s}
=\widehat{\mu}_s(\lambda).
\end{eqnarray*}
When $\alpha>0$,
by \cite[Lemma 2.1]{L}, we have for any $s\ge 0$,
\begin{equation}\label{regular varying}
\lim_{t\to+\infty}\frac{\varphi(t+s)}{\varphi(t)}=\e^{-\alpha s}.
\end{equation}
Thus taking limit in \eqref{conditioned expression}, we get
\begin{equation}\label{conditioned expression for sub}
\lim_{t\to+\infty}\pp_x\left[\left.\e^{-\lambda Z_t}\right|t\le \zeta<t+s\right]
=\lim_{t\to+\infty}\frac{\varphi(t)-\varphi(t+\phi(\lambda+\varphi({s})))}{\varphi(t)-\varphi(t+{s})}
=\frac{1-\e^{-\alpha\phi(\lambda+\varphi(s))}}{1-\e^{-\alpha s}}.
\end{equation}
By \eqref{Lap of Theta}, we have for $\lambda>0$,
$$
\int_{0+}^{+\infty}\e^{-\lambda r-\varphi(s)r}\p(\Theta\in \mathrm{d}r)=\mathrm{E}\left[\e^{-(\lambda+\varphi(s))\Theta}\right]
=1-\e^{-\alpha \phi(\lambda+\varphi(s))}.
$$
In particular,
$\mathrm{E}\left[\e^{-\varphi(s)\Theta}\right]=1-\e^{-\alpha\phi(\varphi(s))}=1-\e^{-\alpha s}$.
Consequently, we get
\[
\lim_{t\to+\infty}\pp_x\left[\left.\e^{-\lambda Z_t}\right|t\le \zeta<t+{s}\right]
=\mathrm{E}\left[\e^{-\lambda W_s}\right],
\]
where the distribution of
$W_s$ is given by \eqref{them4.2.1}.
\end{proof}

Next we shall define the distribution of $Z_{t-q}$ ($0<q<t$)
conditioned on extinction at a fixed time $t$ by taking limit of $\pp_x(Z_{t-q}\in\cdot|t\le \zeta<t+s)$ as $s\to 0+$.  Recall that
$$
\pp_x(\zeta\le t)=\e^{-x\varphi(t)}, \qquad t\ge 0.
$$
Since $\varphi'(t)=-\psi(\varphi(t))$, conditioned on $Z_0=x>0$, the distribution of $\zeta$
has a density function given by
\begin{equation}\label{density of extinction time}
f_{\zeta|Z_0}(t|x)=x\e^{-x\varphi(t)}\psi(\varphi(t)),\qquad t>0.
\end{equation}
For any $s>0$, $0<q<t$ and $\lambda>0$,
\begin{eqnarray}
&&\pp_x\left[\left.\e^{-\lambda Z_{t-q}}\right|t\leq \zeta<t+s\right]
=\frac{\pp_x\left[\e^{-\lambda Z_{t-q}}{\rm I}_{\{t\le \zeta<t+s\}}\right]}{\pp_x(t\le \zeta<t+s)}\nonumber\\
&=&\dfrac{\pp_x\left[\e^{-\lambda Z_{t-q}}\pp_{Z_{t-q}}(q\le \zeta<q+s)\right]}{\pp_x(t\leq \zeta<t+s)}\nonumber\\
&=&\dfrac{\pp_x\left[\e^{-\lambda Z_{t-q}}\int_q^{q+s}f_{\zeta|Z_0}(r|Z_{t-q})\mathrm{d}r\right]}{\int_{t}^{t+s}f_{\zeta|Z_0}(r|x)\mathrm{d}r}\nonumber\\
&\rightarrow & \dfrac{\pp_x\left[Z_{t-q}\e^{-(\lambda+\varphi(q))Z_{t-q}}\right]\psi(\varphi(q))}{x\e^{-x\varphi(t)}\psi(\varphi(t))},\label{eq4.8}
\end{eqnarray}
as $s\to 0+$.  We note that for $\lambda\ge 0$,
\begin{eqnarray}\label{Lap of derivative}
\pp_x\left[Z_{t-q}\e^{-(\lambda+\varphi(q))Z_{t-q}}\right]
&=&\left.x\e^{-x\varphi(t-q+\phi(s))}\frac{\partial}{\partial s}u_{t-q}(s)\right|_{s=\lambda+\varphi(q)}\nonumber\\
&=&x\e^{-x\varphi(t-q+\phi(\lambda+\varphi(q)))}\frac{\psi(\varphi(t-q+\phi(\lambda+\varphi(q))))}{\psi(\lambda+\varphi(q))}.
\end{eqnarray}
In particular,
\begin{equation}\label{eq4.10}
\pp_x\left[Z_{t-q}\e^{-\varphi(q)Z_{t-q}}\right]=x\e^{-x\varphi(t)}\frac{\psi(\varphi(t))}{\psi(\varphi(q))}.
\end{equation}
We can rewrite the limit in \eqref{eq4.8} as
\[
\lim_{s\to 0+}\pp_x\left[\left.\e^{-\lambda Z_{t-q}}\right|t\leq \zeta<t+s\right]=
\frac{\pp_x\left[\e^{-\lambda Z_{t-q}}\cdot Z_{t-q}\e^{-\varphi(q)Z_{t-q}}\right]}{\pp_x\left[Z_{t-q}\e^{-\varphi(q)Z_{t-q}}\right]}.
\]
The term in the right is a Laplace transform of a probability measure on $(0,+\infty)$.
For $0<q<t$, we denote this probability by
\begin{equation}\label{eq4.11}
\pp_x(Z_{t-q}\in\cdot|\zeta=t):=\lim_{s\to 0+}\pp_x\left[\left. Z_{t-q}\in\cdot\right|t\leq\zeta<t+{s}\right]
=\frac{\pp_x\left[ Z_{t-q}\e^{-\varphi(q)Z_{t-q}}; Z_{t-q}\in\cdot \right]}{\pp_x\left[Z_{t-q}\e^{-\varphi(q)Z_{t-q}}\right]}.
\end{equation}
\begin{remark}[Conditioning on extinction vs. conditioning on non-extinction]\label{rm4}
The above argument justifies the definition of the conditional law $\pp_x(Z_{t-q}\in\cdot|\zeta=t)$ for $0<q<t$ and $x>0$.
In fact, applying similar argument, one can show that the limit
$$
\pp_x\left(A|\zeta=t\right):=\lim_{s\to 0+}\pp_x\left(A|t\le \zeta< t+s\right)
$$
exists for any $x>0$,
$0<q<t$
and $A\in \mathcal{F}_{t-q}$.  On the other hand, one can also condition the CB process to be extinct at a fixed time in the sense of $h$-transforms.
Given $t>0$, let
$$
M^{(t)}_s:=Z_s\e^{-\varphi(t-s)Z_s}\psi(\varphi(t-s)),  \quad\forall 0\le s<t.
$$
It is known (cf. \cite[Lemma 4.2]{RSZ}) that $(M^{(t)}_s)_{0\leq s<t}$ is a nonnegative
$(\mathcal{F}_s)_{s<t}$-martingale.
Moreover, it is proved in \cite{RSZ} that the distribution of
$(Z_s)_{s<t}$
under the conditional probability
$\pp_x(\cdot|\zeta=t)$ is the $h$-transform of $\pp_x$ based on this martingale.  That is,
for any $0\le s<t$ and $A\in\mathcal{F}_s$,
\begin{equation}\label{def:M1}
\pp_x(A|\zeta=t)=\pp_x\Big[\frac{M^{(t)}_s}{M_0^{(t)}};A\Big].
\end{equation}
A closely related conditioning for the CB process is
conditioning the process on non-extinction.
The latter is defined by
Lambert \cite{L} in the sense of $h$-transforms.  More precisely, it is shown in \cite{L} that for any $x,t>0$ and $A\in \mathcal{F}_t$,
$$
\lim_{s\to+\infty}\pp_x\left(A|\zeta>t+s\right)=\pp^{\uparrow}_x(A),
$$
where $\pp^{\uparrow}_x$ is the $h$-transform of $\pp_x$ based on the nonnegative $(\mathcal{F}_t)$-martingale
$M_t:=Z_t\e^{\alpha t}$, that is,
\begin{equation}\label{def:M2}
\left.\frac{d\pp^{\uparrow}_x}{d\pp_x}\right|_{\mathcal{F}_t}=\frac{M_t}{M_0},\quad\forall t\ge 0.
\end{equation}
The process conditioned on non-extinction
is denoted by $Z^{\uparrow}$, and called the $Q$-process.
It is proved in \cite{L} that $Z^{\uparrow}$ is distributed as a CB process with immigration (CBI process).
In the remaining of this remark we shall show that for
any $x,t>0$ and $A\in\mathcal F_t$,
\begin{equation}\label{rm4.1}
\lim_{s\to+\infty}\pp_x(A|\zeta=t+s)=\pp^{\uparrow}_x(A).
\end{equation}
This implies that the CB process conditioned to be extinct at time $t+s$, as $s\to +\infty$, has the same law as the Q-process $Z^{\uparrow}$.
To prove \eqref{rm4.1}, we note that for any $t,x>0$ and $s>0$,
$$
\frac{M^{(t+s)}_t}{M^{(t+s)}_0}=\frac{Z_t\e^{-\varphi(s)Z_t}\psi(\varphi(s))}{Z_0\e^{-\varphi(t+s)Z_0}\psi(\varphi(t+s))}.
$$
By \eqref{ratio limit}, we have $\lim_{s\to+\infty}\psi(\varphi(s))/\psi(\varphi(t+s))=\e^{\alpha t}$. It follows that
\[
\lim_{s\to+\infty}\frac{M^{(t+s)}_t}{M^{(t+s)}_0}=\frac{Z_t\e^{\alpha t}}{x}=\frac{M_t}{x},\quad\pp_x\mbox{-a.s.}
\]
Hence by the dominated convergence theorem, we get
\[
\lim_{s\to+\infty}\pp_x(A|\zeta=t+s)=\pp_x\left(\frac{M_t}{x};A\right)=\pp^{\uparrow}_x(A).
\]
\end{remark}

In the next result we obtain the distribution of the CB process conditioned to be extinct
at a fixed time in the limit of large times.
\begin{thm}\label{thm for conditioned on fix time}
For any $q>0$, there is a positive random variable $V_q$
such that for any $\lambda,x>0$,
\begin{equation}\label{them4.3.0}
\lim_{t\to+\infty}\pp_x\left[\left.\e^{-\lambda Z_{t-q}}\right|\zeta=t\right]=
\mathrm{E}\left[\e^{-\lambda V_q}\right]=\e^{-\alpha(\phi(\lambda+\varphi(q))-q)}\frac{\psi(\varphi(q))}{\psi(\lambda+\varphi(q))}.
\end{equation}
Moreover, the distribution of $V_q$ satisfies that
\begin{equation}\label{thm4.3.1}
\p\left(V_q\in \mathrm{d}r\right)=\frac{r\mathrm{P}(W_q\in \mathrm{d}r)}{\mathrm{E}[W_q]},
\end{equation}
where $W_q$ is defined in Theorem \ref{thm:conditioned on near extinction}.
\end{thm}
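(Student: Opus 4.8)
The plan is to start from the representation of the conditional law obtained just before the statement, namely
$$
\pp_x\left[\left.\e^{-\lambda Z_{t-q}}\right|\zeta=t\right]
=\frac{\pp_x\left[Z_{t-q}\e^{-(\lambda+\varphi(q))Z_{t-q}}\right]}{\pp_x\left[Z_{t-q}\e^{-\varphi(q)Z_{t-q}}\right]},
$$
and to substitute the explicit evaluations provided by \eqref{Lap of derivative} into both numerator and denominator. After cancelling the common factor $x$, this expresses the conditional Laplace transform as a product of three factors: an exponential factor $\e^{-x[\varphi(t-q+\phi(\lambda+\varphi(q)))-\varphi(t)]}$, a ratio of branching mechanisms $\psi(\varphi(t-q+\phi(\lambda+\varphi(q))))/\psi(\varphi(t))$, and the $t$-independent factor $\psi(\varphi(q))/\psi(\lambda+\varphi(q))$. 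I would then let $t\to+\infty$ in each factor separately.

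First I would dispose of the exponential factor: since $\varphi(t)\to 0$ as $t\to+\infty$ and $t-q+\phi(\lambda+\varphi(q))\to+\infty$, both arguments of $\varphi$ tend to $0$, so this factor tends to $1$. The main step is to evaluate the limit of the $\psi$-ratio. Writing $a:=\phi(\lambda+\varphi(q))-q$, which is a fixed real number (negative, since $\phi$ is strictly decreasing and $\phi(\varphi(q))=q$), this amounts to computing $\lim_{t\to+\infty}\psi(\varphi(t+a))/\psi(\varphi(t))$. I would extend the argument behind \eqref{ratio limit} to an arbitrary shift $a\in\R$: using $(\psi(\varphi(\cdot)))'=-\psi'(\varphi(\cdot))\psi(\varphi(\cdot))$ one has
$$
\ln\frac{\psi(\varphi(t+a))}{\psi(\varphi(t))}=-\int_t^{t+a}\psi'(\varphi(u))\,du,
$$
and since $\psi'$ is continuous and increasing with $\psi'(0+)=\alpha$ while $\varphi(u)\downarrow 0$, the squeeze $\alpha\le\psi'(\varphi(u))\le\psi'(\varphi(t\wedge(t+a)))$ on the relevant interval gives $\int_t^{t+a}\psi'(\varphi(u))\,du\to\alpha a$. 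Hence the $\psi$-ratio converges to $\e^{-\alpha a}=\e^{-\alpha(\phi(\lambda+\varphi(q))-q)}$, and combining the three limits yields exactly the right-hand side of \eqref{them4.3.0}.

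It then remains to identify this limit as the Laplace transform of a genuine positive random variable and to verify the size-biasing formula \eqref{thm4.3.1}. I would do both at once by computing the Laplace transform of the size-biased law $r\,\mathrm{P}(W_q\in dr)/\mathrm{E}[W_q]$, which equals $-L'(\lambda)/\mathrm{E}[W_q]$, where $L(\lambda):=\mathrm{E}[\e^{-\lambda W_q}]$ is the Laplace transform of $W_q$ given in Theorem \ref{thm:conditioned on near extinction} taken with $s=q$, and $\mathrm{E}[W_q]=-L'(0)$. Differentiating $L$ and using $\phi'(\cdot)=-1/\psi(\cdot)$ yields, in both cases $\alpha>0$ and $\alpha=0$, precisely $\e^{-\alpha(\phi(\lambda+\varphi(q))-q)}\psi(\varphi(q))/\psi(\lambda+\varphi(q))$; in particular $\mathrm{E}[W_q]$ is finite and strictly positive, so the size-biased law is a bona fide probability measure, which defines $V_q$ and establishes \eqref{thm4.3.1}.

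As for the main obstacle, the computation itself is short, and the only point requiring genuine care is the limit of the $\psi$-ratio, where the shift $a$ is negative and the critical case $\alpha=0$ must be subsumed under the same argument. I expect the monotonicity squeeze on $\psi'(\varphi(\cdot))$ to handle both regimes uniformly; as a safeguard for the existence of $V_q$ one may alternatively invoke the continuity theorem for Laplace transforms, noting that the limit in \eqref{them4.3.0} is continuous in $\lambda$ and equals $1$ at $\lambda=0$.
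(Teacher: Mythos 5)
Your proposal is correct and follows essentially the same route as the paper: it starts from the same explicit formula obtained by inserting \eqref{Lap of derivative} into the conditional Laplace transform, sends $t\to+\infty$ factor by factor, and verifies \eqref{thm4.3.1} by differentiating the Laplace transform of $W_q$ and normalizing by $\mathrm{E}[W_q]=-L'(0+)$. The only (harmless) difference is that you treat the limit of the ratio $\psi(\varphi(t+a))/\psi(\varphi(t))$ with $a=\phi(\lambda+\varphi(q))-q$ in a unified way via $\ln\bigl(\psi(\varphi(t+a))/\psi(\varphi(t))\bigr)=-\int_t^{t+a}\psi'(\varphi(u))\,du\to-\alpha a$, whereas the paper splits into the cases $\alpha>0$ (using \eqref{regular varying}) and $\alpha=0$ (using \eqref{ratio limit}).
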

\begin{proof}
Combining \eqref{eq4.8} and \eqref{Lap of derivative}, we have for all $\lambda>0$,
\begin{equation}\label{them4.3.2}
\pp_x\left[\e^{-\lambda Z_{t-q}}|\zeta=t\right]
=\e^{-x(\varphi(t-q+\phi(\lambda+\varphi(q)))-\varphi(t))}\frac{\psi(\varphi(t-q+\phi(\lambda+\varphi(q))))}{\psi(\varphi(t))}
\,\frac{\psi(\varphi(q))}{\psi(\lambda+\varphi(q))}.
\end{equation}
If $\alpha>0$, then by \eqref{regular varying} as $t\to+\infty$,
$$
\frac{\psi(\varphi(t-q+\phi(\lambda+\varphi(q))))}{\psi(\varphi(t))}\sim
\frac{\alpha\varphi(t-q+\phi(\lambda+\varphi(q)))}{\alpha\varphi(t)}
\rightarrow\e^{-\alpha(\phi(\lambda+\varphi(q))-q)}.
$$
Otherwise if $\alpha=0$, by \eqref{ratio limit}, one has
$$
\lim_{t\to+\infty}\frac{\psi(\varphi(t-q+\phi(\lambda+\varphi(q))))}{\psi(\varphi(t))}=1.
$$
In either case, one has
$$
\lim_{t\to+\infty}\frac{\psi(\varphi(t-q+\phi(\lambda+\varphi(q))))}{\psi(\varphi(t))}=\e^{-\alpha(\phi(\lambda+\varphi(q))-q)}.
$$
Hence we get \eqref{them4.3.0} by letting $t\to+\infty$ in \eqref{them4.3.2}.
It follows by the first conclusion of Theorem \ref{thm:conditioned on near extinction} that for any $\lambda>0$,
\begin{equation*}
\mathrm{E}\left[W_q\e^{-\lambda W_q}\right]=-\frac{\mathrm{d}}{\mathrm{d}\lambda}\mathrm{E}\left[\e^{-\lambda W_q}\right]
=\begin{cases}
\dfrac{\alpha}{1-\e^{-\alpha q}}\,\dfrac{1}{\psi(\lambda+\varphi(q))}\e^{-\alpha\phi(\lambda+\varphi(q))},&\quad\alpha>0;\\
\dfrac{1}{q\psi(\lambda+\varphi(q))},&\quad\alpha=0.
\end{cases}
\end{equation*}
By letting $\lambda\to 0+$, we have
$$
\mathrm{E}[W_q]
=\begin{cases}
\dfrac{\alpha}{\e^{\alpha q}-1}\,\dfrac{1}{\psi(\varphi(q))},&\quad \alpha>0;\\
\dfrac{1}{q\psi(\varphi(q))},&\quad \alpha=0.
\end{cases}
$$
Thus we get
$$
\frac{1}{\mathrm{E}[W_q]}\int_0^{+\infty}\e^{-\lambda r}r\mathrm{P}\left(W_q\in \mathrm{d}r\right)
=\frac{\mathrm{E}\left[W_q\e^{-\lambda W_q}\right]}{\mathrm{E}[W_q]}
=\e^{-\alpha(\phi(\lambda+\varphi(q))-q)}\frac{\psi(\varphi(q))}{\psi(\lambda+\varphi(q))}.
$$
This yields \eqref{thm4.3.1}.
\end{proof}

There is another way to obtain the distribution of $V_q$ for
the critical CB process
by reversing the process from the extinction time $\zeta$.
\begin{prop}
Suppose $(Z_t)_{t\geq 0}$ is a critical CB process.
For any $q>0$, under $\pp_{x}$, $Z_{\zeta-q}{\rm I}_{\{\zeta>q\}}$  converges in distribution to $V_q$ as $x\to+\infty$.
\end{prop}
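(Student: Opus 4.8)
The plan is to compute the Laplace transform of the sub-probability measure $\pp_x(Z_{\zeta-q}\in\cdot\,;\zeta>q)$ and to show that, as $x\to+\infty$, it converges to the Laplace transform $\mathrm{E}[\e^{-\lambda V_q}]$ recorded in \eqref{them4.3.0}. Convergence in distribution then follows from the continuity theorem for Laplace transforms (Lemma \ref{A1}), provided one also checks that the atom at $0$ that $Z_{\zeta-q}\mathrm{I}_{\{\zeta>q\}}$ carries on $\{\zeta\le q\}$, of mass $\pp_x(\zeta\le q)=\e^{-x\varphi(q)}$, disappears in the limit. Thus the whole statement reduces to one explicit computation followed by a single passage to the limit $x\to+\infty$.

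First I would disintegrate over the value of the extinction time. Since $Z_{\zeta-q}=Z_{t-q}$ on $\{\zeta=t\}$ for $t>q$, one has
$$
\pp_x\left[\e^{-\lambda Z_{\zeta-q}};\zeta>q\right]=\int_q^{+\infty}\pp_x\left[\e^{-\lambda Z_{t-q}};\zeta\in dt\right].
$$
Applying the Markov property at time $t-q$ and using the extinction-time density \eqref{density of extinction time} for the process restarted from $Z_{t-q}$, the remaining lifetime $q$ contributes the weight $Z_{t-q}\e^{-\varphi(q)Z_{t-q}}\psi(\varphi(q))$, so that
$$
\pp_x\left[\e^{-\lambda Z_{\zeta-q}};\zeta>q\right]=\psi(\varphi(q))\int_q^{+\infty}\pp_x\left[Z_{t-q}\e^{-(\lambda+\varphi(q))Z_{t-q}}\right]dt.
$$
Next I would insert the closed form \eqref{Lap of derivative} for $\pp_x[Z_{t-q}\e^{-(\lambda+\varphi(q))Z_{t-q}}]$ and substitute $v=t-q+\phi(\lambda+\varphi(q))$. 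The purpose of this substitution is that, because $\varphi'=-\psi(\varphi)$, the integrand becomes the exact derivative $\tfrac{d}{dv}\e^{-x\varphi(v)}$; the integral then telescopes and yields the closed expression $\frac{\psi(\varphi(q))}{\psi(\lambda+\varphi(q))}\bigl(1-\e^{-x\varphi(\phi(\lambda+\varphi(q)))}\bigr)$, in which $\varphi(\phi(\lambda+\varphi(q)))=\lambda+\varphi(q)$.

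Finally, letting $x\to+\infty$ removes the exponential term (as $\lambda+\varphi(q)>0$), and the remaining step is to identify the surviving Laplace transform with $\mathrm{E}[\e^{-\lambda V_q}]$ given in \eqref{them4.3.0}, after which Laplace continuity finishes the proof. A convenient consistency check is that at $\lambda=0$ the finite-$x$ identity collapses to $\pp_x(\zeta>q)=1-\e^{-x\varphi(q)}$, which simultaneously confirms the telescoping and shows that no mass leaks away in the limit. The delicate points are the Markov--Fubini disintegration over $\{\zeta\in dt\}$ (justifying the interchange of expectation and the time integral for the nonnegative integrand) and the final matching of the limiting transform with the precise law of $V_q$ in \eqref{them4.3.0}; everything else follows routinely from $\varphi'=-\psi(\varphi)$ and $\varphi(+\infty)=0$.
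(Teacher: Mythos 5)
Your first half is exactly the paper's first half: both arguments disintegrate over $\{\zeta\in dt\}$, apply the Markov property at time $t-q$ together with the extinction-time density \eqref{density of extinction time}, and arrive at
\[
\pp_x\left[\e^{-\lambda Z_{\zeta-q}}{\rm I}_{\{\zeta>q\}}\right]=\psi(\varphi(q))\int_q^{+\infty}\pp_x\left[Z_{t-q}\e^{-(\lambda+\varphi(q))Z_{t-q}}\right]dt.
\]
You then diverge. The paper rewrites the time integral as $\int_{0+}^{+\infty}y\e^{-(\lambda+\varphi(q))y}G(x,dy)$ and lets $x\to+\infty$ via the vague convergence $G(x,\cdot)\to\mu$ of Theorem \ref{thm con of green}; you instead insert \eqref{Lap of derivative} and integrate in closed form, the substitution $v=t-q+\phi(\lambda+\varphi(q))$ turning the integrand into $\frac{d}{dv}\e^{-x\varphi(v)}$ and yielding the exact finite-$x$ identity
\[
\pp_x\left[\e^{-\lambda Z_{\zeta-q}}{\rm I}_{\{\zeta>q\}}\right]=\frac{\psi(\varphi(q))}{\psi(\lambda+\varphi(q))}\left(1-\e^{-x(\lambda+\varphi(q))}\right).
\]
Your route is more elementary and in one respect stronger: it gives the law on $\{\zeta>q\}$ for every finite $x$, the $\lambda=0$ evaluation confirms that only the atom $\e^{-x\varphi(q)}$ at the origin is lost in the limit, and it avoids applying the vague convergence of $G(x,\cdot)$ to the non-compactly-supported test function $y\mapsto y\,\e^{-(\lambda+\varphi(q))y}$ (a point the paper passes over; it can be repaired from the Laplace-transform convergence established inside the proof of Theorem \ref{thm con of green}, but your computation makes the issue moot). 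Both arguments land on the same limiting transform $\psi(\varphi(q))/\psi(\lambda+\varphi(q))$.

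The one genuine gap is precisely the step you defer, namely ``the final matching of the limiting transform with the precise law of $V_q$ in \eqref{them4.3.0}''. By \eqref{them4.3.0}, $\mathrm{E}\left[\e^{-\lambda V_q}\right]=\e^{-\alpha(\phi(\lambda+\varphi(q))-q)}\,\psi(\varphi(q))/\psi(\lambda+\varphi(q))$, and since $\phi(\lambda+\varphi(q))<\phi(\varphi(q))=q$ for $\lambda>0$, the exponential prefactor is identically $1$ only when $\alpha=0$. So the matching succeeds in the critical case but fails when $\alpha>0$: there the limit you obtain is the probability law $\psi(\varphi(q))\e^{-\varphi(q)y}W(y)\,dy$, which differs from the law of $V_q$ by the factor $\e^{-\alpha(\phi(\lambda+\varphi(q))-q)}>1$. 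The paper's own proof has the identical defect in its final displayed equality, so this is a problem with the statement rather than with your computation; but since you left the identification as ``the remaining step'', be aware that it cannot be completed as stated in the subcritical case.
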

\begin{proof}
For any $\lambda>0$, by the total probability formula,
\begin{eqnarray*}
\pp_x\left[\e^{-\lambda Z_{\zeta-q}}{\rm I}_{\{\zeta>q\}}\right]
=\int_q^{+\infty} f_{\eta|Z_0}(t|x)\pp_x\left[\e^{-\lambda Z_{\zeta-q}}|\zeta=t\right]\mathrm{d}t.
\end{eqnarray*}
Here $f_{\eta|Z_0}(t|x)$ is the probability density function of $\zeta$ given that $Z_0=x$.
By \eqref{density of extinction time}, \eqref{eq4.10} and \eqref{eq4.11},
we get that
\begin{eqnarray*}
&&\pp_x\left[\e^{-\lambda Z_{\zeta-q}}{\rm I}_{\{\zeta>q\}}\right]
=\psi(\varphi(q))\int_q^{+\infty} \pp_x\left[Z_{t-q}\e^{-(\lambda+\varphi(q)) Z_{t-q}}\right]\mathrm{d}t\\
&=&\psi(\varphi(q))\int_0^{+\infty} \pp_x\left[Z_t\e^{-(\lambda+\varphi(q)) Z_t}\right]\mathrm{d}t
=\psi(\varphi(q))\int_{0+}^{+\infty} y \e^{-(\lambda+\varphi(q))y}G(x,\mathrm{d}y).
\end{eqnarray*}
It follows from Theorem \ref{thm con of green} that
\begin{eqnarray*}
&&\lim_{x\to+\infty}\pp_x\left[\e^{-\lambda Z_{\zeta-q}}{\rm I}_{\{\zeta>q\}}\right]=\psi(\varphi(q))\lim_{x\to+\infty}
\int_{0+}^{+\infty} y \e^{-(\lambda+\varphi(q))y}G(x,\mathrm{d}y)\\
&=&\psi(\varphi(q))\int_{0+}^{+\infty}  y \e^{-(\lambda+\varphi(q))y}\mu(\mathrm{d}y)
=\psi(\varphi(q))\int_{0+}^{+\infty} \e^{-(\lambda+\varphi(q))y}W(y)\mathrm{d}y\\
&=&\frac{\psi(\varphi(q))}{\psi(\lambda+\varphi(q))}=\mathrm{E}[\e^{-\lambda V_q}].
\end{eqnarray*}
We observe that
\[
\lim_{x\to+\infty}\pp_x(\zeta\leq q)=\lim_{x\to+\infty}1-\e^{-x\varphi(q)}=0.
\]
Thus for every $\lambda>0$,
$$\pp_{x}\left[\e^{-\lambda Z_{\zeta-q}{\rm I}_{\{\zeta>q\}}}\right]=\pp_{x}\left[\e^{-\lambda Z_{\zeta-q}}{\rm I}_{\{\zeta>q\}}\right]+\pp_{x}\left(\zeta\le q\right)\to \mathrm{E}\left[\e^{-\lambda V_{q}}\right]\mbox{ as }x\to+\infty.$$
We complete the proof.
\end{proof}

Finally we give some examples
to illustrate the results obtained in this subsection.
\begin{exa}
Suppose $(Z_t)_{t\ge 0}$ is a critical CB process with branching mechanism $\psi(\lambda)=\lambda^{\beta}$ ($1<\beta\le 2$). Then the corresponding scale function $W(x)=x^{\beta-1}/\Gamma(\beta)$ for $x>0$, and $\varphi(t)=((\beta-1)t)^{-1/(\beta-1)}$ for $t>0$. So the stationary measure on $(0,+\infty)$ is given by $\mu(\mathrm{d}x)=\frac{x^{\beta-2}}{\Gamma(\beta)}\mathrm{d}x$ for $x>0$.

By Theorem \ref{thm:conditioned on near extinction}, for any $q>0$, conditioned on $\{t-q\le \zeta<t\}$, $Z_{t-q}$ converges in distribution to a positive random variable $W_{q}$ as $t\to+\infty$,
where $W_{q}$ has a Gamma$\left([q(\beta-1)]^{-1/(\beta-1)},\beta-1\right)$-distribution with the probability density function given by
 \[
 g_q(x)=\frac{x^{\beta-2}}{q\Gamma(\beta)}\exp\left\{-\frac{x}{[q(\beta-1)]^{1/(\beta-1)}}\right\},\qquad x>0.
 \]
By Theorem \ref{thm for conditioned on fix time}, for any $q>0$, conditioned on $\{\zeta=t\}$, $Z_{t-q}$ converges in distribution to a positive random
variable $V_q$ as $t\to+\infty$,
where $V_q$ has a Gamma$\left([q(\beta-1)]^{-1/(\beta-1)},\beta\right)$-distribution with the probability density function given by
\[
 p_q(x)=\frac{x^{\beta-1}}{\Gamma(\beta)[q(\beta-1)]^{\frac{\beta}{\beta-1}}}\exp\left\{-\dfrac{x}{[q(\beta-1)]^{1/(\beta-1)}}\right\},\qquad x>0.
 \]
In particular, when $\beta=2$,  $W_q$ is distributed according to the exponential distribution with parameter $1/q$, and
$V_q$ is distributed according to
Gamma distribution with parameter $(1/q,2)$.
\end{exa}
\begin{exa}
Suppose $(Z_t)_{t\ge 0}$ is a subcritical CB process with branching mechanism $\psi(\lambda)=\lambda+\lambda^{2}$. Then by elementary calculation, one gets that $W(x)=1-\e^{-x}$ for $x>0$, $\phi(\lambda)=\ln(1+\lambda^{-1})$ for $\lambda>0$ and $\varphi(t)=(\e^{t}-1)^{-1}$ for $t>0$. The Laplace transform of the Yaglom distribution $\nu_{1}(\mathrm{d}x)$ is given by
$$
\widehat{\nu}_1(\lambda)=1-\e^{-\phi(\lambda)}=\frac{1}{\lambda+1},\quad\forall \lambda>0.
$$
So the corresponding Yaglom distribution is the exponential distribution with parameter $1$.
It follows by Theorem \ref{thm:conditioned on near extinction} that for any $q>0$, conditioned on $\{t-q\le \zeta<t\}$, $Z_{t-q}$ converges in distribution to a positive random variable $W_{q}$ as $t\to+\infty$, where $W_{q}$ is exponentially distributed with parameter $1+(\e^{q}-1)^{-1}$.
Moreover by Theorem \ref{thm for conditioned on fix time}, for any $q>0$, conditioned on $\{\zeta=t\}$, $Z_{t-q}$ converges in distribution to a positive random variable $V_q$ as $t\to+\infty$, where $V_{q}$ is distributed according to Gamma distribution with parameter $(1+(\e^q-1)^{-1},2)$.
\end{exa}
\subsection{Further properties of the limiting distributions}
In this subsection we will investigate properties of the distribution of
$V_{q}$ obtained in Theorem \ref{thm for conditioned on fix time}.
We show that it is infinitely divisible, and give a representation of its L\'{e}vy-Khintchine triplet. Then we show the distribution of $V_q$
is weakly convergent as $q\to+\infty$, and give a necessary and sufficient condition for the limit distribution to be non-degenerate.

Recall that $(X_t)_{t\ge 0}$ is a spectrally positive L\'{e}vy process with Laplace exponent $\psi$ and $W$ is a corresponding scale function.
Under the assumption \eqref{extinct assumption}, $X$ has unbounded variation. Hence by \cite[Lemma 3.1]{KKR} $W(0)=0$.
Moreover, by \cite[Lemma 8.2]{Kyprianou} (and the reference therein), the restriction of $W$ to $(0,+\infty)$ is continuously differentiable.
\begin{prop}\label{prop4.5}
For any $q>0$, the distribution of $V_q$ is infinitely divisible and its Laplace exponent
$l_q(\lambda):=-\ln\mathrm{E}\left[\e^{-\lambda V_q}\right]$
is given by
\begin{equation}\label{prop4.5.0}
l_q(\lambda)=\int_{\varphi(q)}^{\lambda+\varphi(q)}\frac{\psi^{'}(s)-\alpha}{\psi(s)}\mathrm{d}s,\qquad\lambda>0.
\end{equation}
Moreover,
$l_q(\lambda)$
has the L\'{e}vy-Khintchine decomposition
\begin{equation}\label{prop4.5.1}
l_q(\lambda)=b_q\lambda+\int_0^{+\infty}\left(1-\e^{-\lambda x}\right)\frac{v_q(x)}{x}\mathrm{d}x,
\end{equation}
where $b_{q}=0$,
\begin{equation}\label{prop4.5.2}
 v_q(x)=\e^{-\varphi(q)x}\left[\sigma^2 W'(x)
 +\int_{(0,+\infty)}\left(W(x)-W(x-r)\right)r\pi(\mathrm{d}r)\right],\qquad x>0,
\end{equation}
and $W'(x)$ denotes the derivative of $W(x)$.
\end{prop}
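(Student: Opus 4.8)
The plan is to begin from the explicit Laplace transform of $V_q$ in \eqref{them4.3.0} and compute $l_q(\lambda)=-\ln\mathrm{E}[\e^{-\lambda V_q}]=\alpha(\phi(\lambda+\varphi(q))-q)+\ln\psi(\lambda+\varphi(q))-\ln\psi(\varphi(q))$ directly. Since $\varphi=\phi^{-1}$ one has $q=\phi(\varphi(q))$, so $\phi(\lambda+\varphi(q))-q=\phi(\lambda+\varphi(q))-\phi(\varphi(q))=-\int_{\varphi(q)}^{\lambda+\varphi(q)}\psi(u)^{-1}\,du$, while $\ln\psi(\lambda+\varphi(q))-\ln\psi(\varphi(q))=\int_{\varphi(q)}^{\lambda+\varphi(q)}\psi'(s)/\psi(s)\,ds$. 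Adding the two integrals over $[\varphi(q),\lambda+\varphi(q)]$ gives \eqref{prop4.5.0} immediately.

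For the L\'evy--Khintchine decomposition I would differentiate \eqref{prop4.5.0}, obtaining $l_q'(\lambda)=(\psi'(\lambda+\varphi(q))-\alpha)/\psi(\lambda+\varphi(q))$, and identify the right-hand side as a Laplace transform in $\lambda$. Setting $\mu=\lambda+\varphi(q)$ and using $\psi'(\mu)-\alpha=\sigma^2\mu+\int_{0+}^{+\infty}r(1-\e^{-\mu r})\pi(dr)$ (from \eqref{def:psi}) together with $1/\psi(\mu)=\int_0^{+\infty}\e^{-\mu x}W(x)\,dx$ from \eqref{Laplace of scale function}, the task reduces to two elementary Laplace identities. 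First, because $W(0)=0$, integration by parts gives $\mu/\psi(\mu)=\int_0^{+\infty}\e^{-\mu x}W'(x)\,dx$, so the Gaussian part contributes $\sigma^2\e^{-\varphi(q)x}W'(x)$. Second, the shift identity $\e^{-\mu r}/\psi(\mu)=\int_r^{+\infty}\e^{-\mu y}W(y-r)\,dy$ yields $(1-\e^{-\mu r})/\psi(\mu)=\int_0^{+\infty}\e^{-\mu x}(W(x)-W(x-r))\,dx$.

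Integrating the second identity against $r\pi(dr)$ and interchanging the order of integration then shows $l_q'(\lambda)=\int_0^{+\infty}\e^{-\lambda x}v_q(x)\,dx$ with $v_q$ exactly as in \eqref{prop4.5.2}, the factor $\e^{-\varphi(q)x}$ coming from the substitution $\mu=\lambda+\varphi(q)$. Since $l_q(0)=0$, integrating in $\lambda$ and using Fubini once more gives $l_q(\lambda)=\int_0^{+\infty}(1-\e^{-\lambda x})v_q(x)x^{-1}\,dx$, i.e. \eqref{prop4.5.1} with $b_q=0$. To conclude infinite divisibility it remains to check that $v_q(x)/x$ is a genuine L\'evy measure: nonnegativity is immediate from the monotonicity of $W$ (so $W'\ge 0$ and $W(x)-W(x-r)\ge 0$), and $\int_0^{+\infty}(1\wedge x)v_q(x)x^{-1}\,dx<+\infty$ follows from $l_q(\lambda)<+\infty$ for $\lambda>0$ together with $1-\e^{-\lambda x}\asymp 1\wedge x$. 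Then $\mathrm{E}[\e^{-\lambda V_q}]=\exp(-l_q(\lambda))$ is the Laplace transform of an infinitely divisible law on $(0,+\infty)$ with no drift.

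The main technical points are the two Laplace identities rather than the interchange of integrals: since $(W(x)-W(x-r))r$ is nonnegative, the exchange in the jump term is justified by Tonelli's theorem with no separate integrability check, the common value being finite because $\psi'(\mu)-\alpha<+\infty$. The delicate input is the Gaussian term, where $W(0)=0$ (valid since $X$ has unbounded variation under \eqref{extinct assumption}) is what collapses $\mu/\psi(\mu)$ exactly onto the Laplace transform of $W'$; that $W'$ exists and is continuous on $(0,+\infty)$ is precisely what was recorded just before the statement.
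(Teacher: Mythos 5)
Your proposal is correct and follows the same skeleton as the paper's proof: obtain $l_q$ from the explicit Laplace transform in Theorem \ref{thm for conditioned on fix time}, differentiate to get $l_q'(\lambda)=\bigl(\psi'(\lambda+\varphi(q))-\alpha\bigr)/\psi(\lambda+\varphi(q))$, represent this as a Laplace transform in $\lambda$, and read off the triplet, with the factor $\e^{-\varphi(q)x}$ coming from the shift $\mu=\lambda+\varphi(q)$. The one genuine difference is at the central identity
$\frac{\psi'(\mu)-\alpha}{\psi(\mu)}=\sigma^2\int_0^{+\infty}\e^{-\mu x}W'(x)\,dx+\int_0^{+\infty}\e^{-\mu x}\bigl[\int_{(0,+\infty)}(W(x)-W(x-r))r\pi(dr)\bigr]dx$:
the paper imports this from Keller-Ressel and Mijatovi\'c \cite{KR-M} (eqs.\ (3.15)--(3.16), with $F(u)=\psi'(u)-\alpha$, $R(u)=-\psi(u)$), whereas you derive it from scratch via integration by parts on \eqref{Laplace of scale function} (using $W(0)=0$) and the shift identity $(1-\e^{-\mu r})/\psi(\mu)=\int_0^{+\infty}\e^{-\mu x}(W(x)-W(x-r))\,dx$, justified by Tonelli. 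This buys self-containedness at essentially no cost; your concluding step (nonnegativity of $v_q$ from monotonicity of $W$, integrability of $(1\wedge x)v_q(x)/x$ from finiteness of $l_q$) establishes infinite divisibility directly from the L\'evy--Khintchine form, which is equivalent to the paper's route through complete monotonicity of $l_q'$.
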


\begin{proof}
By Theorem \ref{thm for conditioned on fix time} we have
$$
l_q(\lambda)=\ln\frac{\psi(\lambda+\varphi(q))}{\psi(\varphi(q))}+\alpha\left(\phi(\lambda+\varphi(q))-q\right).
$$
Consequently,
$$
l'_q(\lambda)=\frac{\psi'(\lambda+\varphi(q))-\alpha}{\psi(\lambda+\varphi(q))},\quad\forall \lambda>0.
$$
Thus \eqref{prop4.5.0}
follows by taking integrals on both sides of the above equation. Note that $l_q(\lambda)\to 0$ as $\lambda\to 0+$.
So to show the distribution of $V_q$ is infinitely divisible, it suffices to show that $l_q(\lambda)$ is a Berstein function,
or equivalently, the first derivative of $l_a(\lambda)$ is completely monotone, that is, $(-1)^nl^{(n+1)}_{q}(\lambda)\ge 0$ for all $\lambda>0$ and $n=0,1,2,...$.

We note that
$$
\psi'(u)-\alpha=\sigma^2 u+\int_{(0,+\infty)}\left(1-\e^{-ur}\right)r\pi(\mathrm{d}r),\quad\forall u>0,
$$
is the Laplace exponent of a L\'evy subordinator.
Applying \cite[eq. (3.15) and eq. (3.16)]{KR-M} by taking $F(u)=\psi'(u)-\alpha$ and $R(u)=-\psi(u)$ (and correspondingly $b=\sigma^2$ and
$m(\mathrm{d}r)=r\pi(\mathrm{d}r)$),
we get that
\begin{eqnarray*}
\frac{F(u)}{\psi(u)}&=&\sigma^2 W(0)+\sigma^2\int_0^{+\infty}\e^{-u x}W'(x)\mathrm{d}x\nonumber\\
&&\quad+\int_0^{+\infty}\e^{-u x}\left[\int_{(0,+\infty)}\left(W(x)-W(x-r)\right)r\pi(\mathrm{d}r)\right]\mathrm{d}x,\quad\forall u>0.
\end{eqnarray*}
It follows that for $\lambda>0$,
\begin{eqnarray}
l'_q(\lambda)&=&\frac{F(\lambda+\varphi(q))}{\psi(\lambda+\varphi(q))}\nonumber\\
&=&\sigma^2W(0)+\sigma^2\int_0^{+\infty}\e^{-\lambda
x}\left(\e^{-\varphi(q)x}W'(x)\right)\mathrm{d}x\nonumber\\
&&+\int_0^{+\infty}\e^{-\lambda x}\left[\e^{-\varphi(q)x}\int_{(0,+\infty)}\left(W(x)-W(x-r)\right)r\pi(\mathrm{d}r)\right]\mathrm{d}x.\label{prop4.5.3}
\end{eqnarray}
One can easily show by the above identity that $l'_q(\lambda)$ is completely monotone.
Suppose the L\'evy-Khintchine decomposition of $l_q(\lambda)$ is given by
$$
l_q(\lambda)=b_q\lambda+\int_{(0,+\infty)}\left(1-\e^{-\lambda x}\right)\Gamma_q(\mathrm{d}x)
,\quad \lambda>0,
$$
where $b_q\ge 0$ and $\Gamma_q$ is a measure on $(0,+\infty)$ such that $\int_{(0,+\infty)}(1\wedge x)\Gamma_q(\mathrm{d}x)<+\infty$. Then
$$
l'_q(\lambda)=b_q+\int_{(0,+\infty)}\e^{-\lambda x}x\Gamma_q(\mathrm{d}x).
$$
Comparing the right hand side with that of \eqref{prop4.5.3}, we deduce that $b_q=\sigma^2W(0)=0$
and $\Gamma_q(\mathrm{d}x)=\frac{v_q(x)}{x}\mathrm{d}x$
with $v_q(x)$ being given by \eqref{prop4.5.2}.
\end{proof}

\begin{prop}\label{prop V_infty}
If
\begin{equation}\label{con1}
\alpha>0\mbox{ and }\int^{+\infty}r\ln r\pi(\mathrm{d}r)<+\infty,
\end{equation}
then $V_q$ converges in distribution as $q\to+\infty$ to a positive random variable $V_\infty$.
The distribution of $V_\infty$ has the following properties.
\begin{itemize}
\item[(i)]It is of the size-biased Yaglom distribution
$$
\p(V_\infty \in \mathrm{d}r)=\frac{r\p(\Theta\in \mathrm{d}r)}{\mathrm{E}[\Theta]}.
$$
\item[(ii)] It is infinitely divisible.
\item[(iii)] Its Laplace exponent
$
l_\infty(\lambda):=-\ln\mathrm{E}\left[\e^{-\lambda V_\infty}\right]
$
is given by
$$
l_{\infty}(\lambda)=\int_0^{\lambda}\frac{\psi'(s)-\alpha}{\psi(s)}\mathrm{d}s,
\quad \lambda>0.
$$
\item[(iv)] $l_\infty(\lambda)$ has the L\'evy-Khintchine decomposition
$$
l_\infty(\lambda)=b_\infty\lambda+\int_0^{+\infty}\left(1-\e^{-\lambda x}\right)\frac{v_\infty(x)}{x}\mathrm{d}x,
$$
where $b_\infty=0$,
and
\begin{equation*}
 v_\infty(x)=\sigma^2W'(x)+\int_0^{+\infty}\left(W(x)-W(x-r)\right)r\pi(\mathrm{d}r),\quad x>0.
\end{equation*}
\end{itemize}
Otherwise if \eqref{con1} fails, then $V_q$ converges in probability as $q\to+\infty$ to infinity.
\end{prop}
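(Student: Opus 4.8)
The plan is to read off everything from the explicit Laplace transform of $V_q$ established in Theorem \ref{thm for conditioned on fix time}, namely
\[
\mathrm{E}\left[\e^{-\lambda V_q}\right]=\e^{-\alpha(\phi(\lambda+\varphi(q))-q)}\frac{\psi(\varphi(q))}{\psi(\lambda+\varphi(q))},\qquad \lambda>0,
\]
and to let $q\to+\infty$. Since $\varphi(q)\to 0$, the factors $\phi(\lambda+\varphi(q))\to\phi(\lambda)$ and $\psi(\lambda+\varphi(q))\to\psi(\lambda)>0$ converge by continuity, so the entire behaviour is governed by the remaining factor $\e^{\alpha q}\psi(\varphi(q))$. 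Under \eqref{con1} one has $\mathrm{E}[\Theta]<+\infty$ (this is precisely the $L\log L$ condition \eqref{con2} from the earlier example), hence $\varphi(q)\sim \mathrm{E}[\Theta]^{-1}\e^{-\alpha q}$, and combined with $\psi(\varphi(q))\sim\alpha\varphi(q)$ this gives $\e^{\alpha q}\psi(\varphi(q))\to\alpha/\mathrm{E}[\Theta]$. Therefore $\mathrm{E}[\e^{-\lambda V_q}]\to \frac{\alpha}{\mathrm{E}[\Theta]}\,\frac{\e^{-\alpha\phi(\lambda)}}{\psi(\lambda)}$ for every $\lambda>0$; this limit tends to $1$ as $\lambda\to 0+$, so by the continuity theorem for Laplace transforms $V_q$ converges in distribution to a positive random variable $V_\infty$ with the displayed limit as its Laplace transform.

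To obtain (i) I would differentiate the Laplace transform of $\Theta$ in \eqref{Lap of Theta}: since $\phi'(\lambda)=-1/\psi(\lambda)$, one finds $\mathrm{E}[\Theta\e^{-\lambda\Theta}]=\frac{\alpha}{\psi(\lambda)}\e^{-\alpha\phi(\lambda)}$ and $\mathrm{E}[\Theta]=\lim_{\lambda\to 0+}\mathrm{E}[\Theta\e^{-\lambda\Theta}]$, so the limiting Laplace transform equals $\mathrm{E}[\Theta\e^{-\lambda\Theta}]/\mathrm{E}[\Theta]$, which is exactly the Laplace transform of the size-biased law $r\,\p(\Theta\in dr)/\mathrm{E}[\Theta]$. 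Writing $l_\infty(\lambda)=-\ln\mathrm{E}[\e^{-\lambda V_\infty}]=\ln\psi(\lambda)+\alpha\phi(\lambda)+\mathrm{const}$ and differentiating yields $l_\infty'(\lambda)=\frac{\psi'(\lambda)-\alpha}{\psi(\lambda)}$; together with $l_\infty(0+)=0$ this gives the formula in (iii). The integrability of the integrand near $0$ is automatic here, being equivalent to $l_\infty(\lambda)<+\infty$, which is in turn exactly \eqref{con1}.

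For (ii) and (iv) I would pass to the limit in Proposition \ref{prop4.5}. Infinite divisibility is preserved under weak convergence to a proper law, so (ii) follows from Proposition \ref{prop4.5} and the convergence just established. For (iv), the L\'evy density $v_q(x)/x$ with $v_q$ given by \eqref{prop4.5.2} carries the factor $\e^{-\varphi(q)x}$; since $W$ is increasing one has $v_q(x)\ge 0$ and $v_q(x)\uparrow v_\infty(x)$ as $\varphi(q)\downarrow 0$, so monotone convergence applied to the integral representation \eqref{prop4.5.3} of $l_q'$ identifies the limiting L\'evy density as $v_\infty(x)/x$ and the drift as $b_\infty=\sigma^2 W(0)=0$ (recall $W(0)=0$).

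Finally, for the degenerate case I would show $\mathrm{E}[\e^{-\lambda V_q}]\to 0$ for every fixed $\lambda>0$. If $\alpha=0$ the exponential prefactor is $1$ and $\mathrm{E}[\e^{-\lambda V_q}]=\psi(\varphi(q))/\psi(\lambda+\varphi(q))\to \psi(0)/\psi(\lambda)=0$. If $\alpha>0$ but $\int^{+\infty} r\ln r\,\pi(dr)=+\infty$, then $\varphi(q)=o(\e^{-\alpha q})$, hence $\e^{\alpha q}\psi(\varphi(q))\sim\alpha\e^{\alpha q}\varphi(q)\to 0$ and again $\mathrm{E}[\e^{-\lambda V_q}]\to\e^{-\alpha\phi(\lambda)}\cdot 0/\psi(\lambda)=0$. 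In either case the bound $\p(V_q\le K)\le\e^{\lambda K}\mathrm{E}[\e^{-\lambda V_q}]\to 0$ for every $K>0$ shows $V_q\to+\infty$ in probability. The main obstacle is conceptual rather than computational: everything hinges on the exact rate of $\psi(\varphi(q))$ as $q\to+\infty$, and it is precisely the dichotomy $\e^{\alpha q}\psi(\varphi(q))\to\alpha/\mathrm{E}[\Theta]\in(0,+\infty)$ versus $\to 0$ that separates a proper limit from escape to infinity; the only genuinely delicate step is justifying the interchange of limit and integral in the L\'evy--Khintchine representation, which the monotonicity of $\e^{-\varphi(q)x}$ handles cleanly.
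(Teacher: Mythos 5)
Your proposal is correct, and it reaches the same conclusions by a route that is organized differently from the paper's. The paper works throughout with the Laplace exponent in its integral form $l_q(\lambda)=\int_{\varphi(q)}^{\lambda+\varphi(q)}\frac{\psi'(s)-\alpha}{\psi(s)}\,ds$ from \eqref{prop4.5.0}, and reduces the whole dichotomy to a separate claim that \eqref{con1} holds if and only if $\int_{0+}\frac{\psi'(s)-\alpha}{\psi(s)}\,ds<+\infty$ (proved via $\psi'(s)/\psi(s)\sim 1/s$ and Lambert's Lemma 2.1); monotone convergence of the integral then gives both the convergence statement and formula (iii) simultaneously, while (i) is read off from the explicit size-biased representation $\p(V_q\in dr)=r\e^{-\varphi(q)r}\p(\Theta\in dr)/\mathrm{E}[\Theta\e^{-\varphi(q)\Theta}]$ obtained by combining \eqref{thm4.3.1} with \eqref{them4.2.1}. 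You instead work with the product form of $\mathrm{E}[\e^{-\lambda V_q}]$ and isolate the factor $\e^{\alpha q}\psi(\varphi(q))$, invoking Lambert's asymptotics $\varphi(q)\sim\mathrm{E}[\Theta]^{-1}\e^{-\alpha q}$ (versus $\varphi(q)=o(\e^{-\alpha q})$) to decide whether this factor converges to $\alpha/\mathrm{E}[\Theta]>0$ or to $0$; this makes the identification of the limit as $\mathrm{E}[\Theta\e^{-\lambda\Theta}]/\mathrm{E}[\Theta]$, i.e.\ property (i), immediate, at the cost of having to verify separately that the limiting transform tends to $1$ at $\lambda=0+$ (which you do, correctly, via monotone convergence and $\mathrm{E}[\Theta]<+\infty$) before applying the continuity theorem. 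The two arguments lean on the same external input (Lambert's Lemma 2.1) in two equivalent guises. Your treatment of (ii) (closure of infinite divisibility under weak convergence) and of (iv) (monotone convergence of $v_q(x)=\e^{-\varphi(q)x}[\cdots]\uparrow v_\infty(x)$ in the representation \eqref{prop4.5.3}, with $b_\infty=\sigma^2W(0)=0$) is more detailed than the paper's one-line ``follow directly from Proposition \ref{prop4.5}'' and is sound; the degenerate case is handled the same way in substance.
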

\begin{proof}
First we claim that \eqref{con1} holds if and only if
$$
\int_{0+}\frac{\psi'(s)-\alpha}{\psi(s)}\mathrm{d}s<+\infty.
$$
In fact, if $\alpha=0$, then
$\int_{0+}\psi'(s)/\psi(s)\mathrm{d}s=\int_{0+}d \ln \psi(s)=+\infty$.  On the other hand, if $\alpha>0$, we have
$$
\frac{s\psi'(s)}{\psi(s)}=\frac{\alpha+\sigma^2s+\int_{(0,+\infty)}\left(1-\e^{-sr}\right)r\pi(\mathrm{d}r)}{\alpha+\frac{1}{2}\sigma^2s
+\int_{(0,+\infty)}\left(\frac{\e^{-s r}-1+s r}{sr}\right)r\pi(\mathrm{d}r)}\to 1\mbox{ as }s\to 0+.
$$
Hence $\psi'(s)/\psi(s)\sim 1/s$ as $s\to 0+$. This implies further that
$$
\int_{0+}\frac{\psi'(s)}{\psi(s)}-\frac{\alpha}{\psi(s)}\mathrm{d}s<+\infty\mbox{ iff }\int_{0+}\frac{1}{s}-\frac{\alpha}{\psi(s)}\mathrm{d}s<+\infty.
$$
By \cite[Lemma 2.1]{L}, the latter holds iff \eqref{con1} holds. Hence we prove the claim.

Let $l_q(\lambda)$ be the Laplace exponent of $V_q$. It follows by \eqref{prop4.5.0}
and the above claim that
$$
\lim_{q\to+\infty}l_q(\lambda)=\lim_{q\to+\infty}\int_{\varphi(q)}^{\lambda+\varphi(q)}\frac{\psi'(s)-\alpha}{\psi(s)}\mathrm{d}s
=\begin{cases}
\int_{0}^{\lambda}\frac{\psi'(s)-\alpha}{\psi(s)}\mathrm{d}s&\mbox {if \eqref{con1} holds;}\\
+\infty&\mbox{ otherwise.}
\end{cases}
$$
So $V_q$ converges in distribution as $q\to+\infty$ to some random variable $V_\infty$ if \eqref{con1} holds,
and $V_q$ converges in probability to infinity if \eqref{con1} fails.
When \eqref{con1} holds, it follows by \eqref{thm4.3.1} and \eqref{them4.2.1} that
$$
\p(V_q\in \mathrm{d}r)=\frac{r\e^{-\varphi(q)r}\p(\Theta\in \mathrm{d}r)}{\mathrm{E}[\Theta \e^{-\varphi(q)\Theta}]}.
$$
Hence (i) follows by letting $q\to+\infty$.  The statements (ii)-(iv)
follow directly from Proposition \ref{prop4.5}.
\end{proof}

Recall that
$(Z^{\uparrow}_t)_{t\ge 0}$
 is the $Q$-process defined in Remark \ref{rm4}.
The next result shows that $Z^{\uparrow}_t$ converges in distribution as $t\to+\infty$,
and its limit distribution is equal to that of $V_q$ as $q\to+\infty$.  Since
$(Z^{\uparrow}_t)_{t\ge 0}$ is a CBI process, criteria for convergence in distribution and properties of the limiting distribution can
readily be found in \cite{KR-M}, but since they follow very easily from Theorem \ref{thm for conditioned on fix time} and
then Proposition \ref{prop V_infty}, we present the proof here for the sake of being more self-contained.

\begin{prop}\label{prop1}
If \eqref{con1} holds, then $Z^{\uparrow}_t$ converges in distribution as $t\to+\infty$ to a positive random variable $Z^{\uparrow}_{\infty}$ which is equal in distribution to $V_\infty$ defined in Proposition \ref{prop V_infty}.
Otherwise if \eqref{con1} fails, $Z^{\uparrow}_t$ converges in probability as $t\to+\infty$ to infinity.
\end{prop}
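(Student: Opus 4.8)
The plan is to compute the Laplace transform of $Z^{\uparrow}_t$ in closed form directly from the $h$-transform definition of the $Q$-process, and then let $t\to+\infty$, matching the limit against the Laplace transform of $V_\infty$ from Proposition \ref{prop V_infty}. Since by \eqref{def:M2} the Radon--Nikodym derivative of $\pp^{\uparrow}_x$ with respect to $\pp_x$ on $\mathcal{F}_t$ is $Z_t\e^{\alpha t}/x$, for any $\lambda,x>0$ I would first write
\[
\pp^{\uparrow}_x\left[\e^{-\lambda Z_t}\right]=\frac{\e^{\alpha t}}{x}\,\pp_x\left[Z_t\e^{-\lambda Z_t}\right]
=-\frac{\e^{\alpha t}}{x}\,\frac{\partial}{\partial\lambda}\e^{-xu_t(\lambda)}
=\e^{\alpha t}\,\e^{-xu_t(\lambda)}\,\frac{\partial u_t(\lambda)}{\partial\lambda}.
\]
Using $u_t(\lambda)=\varphi(t+\phi(\lambda))$ from \eqref{rela of u and varphi} together with $\varphi'=-\psi\circ\varphi$ and $\phi'(\lambda)=-1/\psi(\lambda)$, one gets $\partial_\lambda u_t(\lambda)=\psi(u_t(\lambda))/\psi(\lambda)$, so that
\[
\pp^{\uparrow}_x\left[\e^{-\lambda Z_t}\right]=\e^{\alpha t}\,\e^{-xu_t(\lambda)}\,\frac{\psi(u_t(\lambda))}{\psi(\lambda)},\qquad\lambda,x>0.
\]

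The next step is to take $t\to+\infty$. Since $u_t(\lambda)=\varphi(t+\phi(\lambda))\to0$, the factor $\e^{-xu_t(\lambda)}\to1$ and all $x$-dependence disappears in the limit, so everything reduces to the asymptotics of $\e^{\alpha t}\psi(u_t(\lambda))$. Under \eqref{con1} I would combine $\psi(u)\sim\alpha u$ as $u\to0+$ with the sharp asymptotic $\varphi(t)\sim\e^{-\alpha t}/\mathrm{E}[\Theta]$ established in the Example following Theorem \ref{thm con of semigroup} (available precisely because the second clause of \eqref{con1} is condition \eqref{con2}), together with the regular-variation limit \eqref{regular varying}, to obtain
\[
\lim_{t\to+\infty}\e^{\alpha t}\psi\big(\varphi(t+\phi(\lambda))\big)=\frac{\alpha}{\mathrm{E}[\Theta]}\,\e^{-\alpha\phi(\lambda)},
\]
and hence $\lim_{t\to+\infty}\pp^{\uparrow}_x[\e^{-\lambda Z_t}]=\alpha\e^{-\alpha\phi(\lambda)}/(\mathrm{E}[\Theta]\,\psi(\lambda))$ for every $\lambda>0$.

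To identify this limit I would compute the Laplace transform of $V_\infty$ from Proposition \ref{prop V_infty}(i): since $V_\infty$ is the size-biased Yaglom law, $\mathrm{E}[\e^{-\lambda V_\infty}]=\mathrm{E}[\Theta\e^{-\lambda\Theta}]/\mathrm{E}[\Theta]$, and differentiating \eqref{Lap of Theta} gives $\mathrm{E}[\Theta\e^{-\lambda\Theta}]=\alpha\e^{-\alpha\phi(\lambda)}/\psi(\lambda)$. This is exactly the limit found above, so $Z^{\uparrow}_t$ converges in distribution to $V_\infty$. For the complementary case in which \eqref{con1} fails I would split into $\alpha=0$ (where $\e^{\alpha t}=1$ and $\psi(u_t(\lambda))\to0$) and $\alpha>0$ with $\int^{+\infty}r\ln r\,\pi(dr)=+\infty$ (where $\e^{\alpha t}\varphi(t)\to0$ by the same Example); in both cases $\pp^{\uparrow}_x[\e^{-\lambda Z_t}]\to0$ for every $\lambda>0$, whence $Z^{\uparrow}_t\to+\infty$ in $\pp^{\uparrow}_x$-probability.

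The computation is short, so the main obstacle is bookkeeping rather than analysis: one must correctly pin down $\lim_{t\to+\infty}\e^{\alpha t}\psi(u_t(\lambda))$ and keep straight the two distinct ways in which \eqref{con1} can fail, invoking in each regime the appropriate sharp asymptotic of $\varphi(t)$ from the Example after Theorem \ref{thm con of semigroup}. An alternative, more conceptual route would be to start from the identity \eqref{rm4.1}, which exhibits $\pp^{\uparrow}_x$ as the $s\to+\infty$ limit of the laws conditioned on $\{\zeta=t+s\}$, and then combine Theorem \ref{thm for conditioned on fix time} with Proposition \ref{prop V_infty}; but that approach requires justifying an interchange of the limits in $s$ and $t$, which the direct Laplace-transform computation neatly sidesteps.
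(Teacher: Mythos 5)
Your computation is correct, and it takes a genuinely different (and more direct) route than the paper. You derive the closed form $\pp^{\uparrow}_x[\e^{-\lambda Z^{\uparrow}_t}]=\e^{\alpha t}\e^{-xu_t(\lambda)}\psi(u_t(\lambda))/\psi(\lambda)$ straight from the $h$-transform and the flow identity $u_t(\lambda)=\varphi(t+\phi(\lambda))$, and then pass to the limit in one step using the sharp asymptotic $\varphi(t)\sim\e^{-\alpha t}/\mathrm{E}[\Theta]$ under \eqref{con1} (resp.\ $\varphi(t)=o(\e^{-\alpha t})$ or $\psi(u_t(\lambda))\to0$ when \eqref{con1} fails); the identification of the limit $\alpha\e^{-\alpha\phi(\lambda)}/(\mathrm{E}[\Theta]\psi(\lambda))$ with $\mathrm{E}[\e^{-\lambda V_\infty}]$ via size-biasing of \eqref{Lap of Theta} checks out. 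The paper instead deliberately routes the argument through the conditioned process: it factorizes $\pp^{\uparrow}_x[\e^{-\lambda Z^{\uparrow}_{t-s}}]=\mathrm{I}(\alpha,t,s)\times\mathrm{II}(\lambda,t,s)$ with $\mathrm{II}(\lambda,t,s)=\pp_x[\e^{-(\lambda-\varphi(s))Z_{t-s}}\,|\,\zeta=t]$, applies Theorem \ref{thm for conditioned on fix time} to send $t\to+\infty$, and then lets $s\to+\infty$ using Proposition \ref{prop V_infty} --- in effect carrying out rigorously the interchange of limits that you flag as the drawback of the route via \eqref{rm4.1}, by means of the martingale factorization rather than an abstract interchange. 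The analytic core is the same in both arguments (the limit $\psi(\varphi(t))\e^{\alpha t}\to\alpha/\mathrm{E}[\Theta]$ or $0$, i.e.\ \eqref{prop1.2.2}), but your version buys brevity and needs neither Theorem \ref{thm for conditioned on fix time} nor the auxiliary parameter $s$, while the paper's version buys the explicit link between the $Q$-process and the laws conditioned on $\{\zeta=t\}$, which is part of the point of Section \ref{sec:IV}. The only small thing to make explicit in your write-up is the justification of $\pp_x[Z_t\e^{-\lambda Z_t}]=-\partial_\lambda\pp_x[\e^{-\lambda Z_t}]$ (dominated convergence for $\lambda$ bounded away from $0$, exactly as the paper uses in \eqref{Lap of derivative}) and the remark that the limiting function tends to $1$ as $\lambda\to0+$, so that the continuity theorem for Laplace transforms yields genuine convergence in distribution with no mass escaping to infinity.
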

\begin{proof}
Fix an arbitrary $x>0$. We shall prove the following: For all $\lambda>0$,
\begin{equation}\label{prop1.1}
\lim_{t\to+\infty}\pp^{\uparrow}_x\left[\e^{-\lambda Z^{\uparrow}_t}\right]
=\begin{cases}
\mathrm{E}[\e^{-\lambda V_\infty}], &\quad\mbox{if \eqref{con1} holds;}\\
0,& \quad\mbox{otherwise}.
\end{cases}
\end{equation}
Fix $\lambda>0$. Suppose $s>0$ is sufficiently large such that $\varphi(s)<\lambda$.
Suppose $t\in (s,+\infty)$. Recall the definitions of the martingales $(M^{(t)}_r)_{0\le r<t}$ and $(M_r)_{r\ge 0}$
given in \eqref{def:M1} and \eqref{def:M2} respectively.
 It is easy to see that for $t>s$,
\begin{equation*}
\frac{M_{t-s}}{M_0}=\frac{\psi(\varphi(t))}{\psi(\varphi(s))}\e^{\alpha(t-s)+\varphi(s)Z_{t-s}-\varphi(t)x}
\frac{M^{(t)}_{t-s}}{M^{(t)}_0},\quad\pp_x\mbox{-a.s.}
\end{equation*}
Thus we have for $t>s$,
\begin{eqnarray}\label{prop1.2}
\pp^{\uparrow}_x\left[\e^{-\lambda Z^{\uparrow}_{t-s}}\right]&=&\pp_x\left[\frac{M_{t-s}}{M_0}\e^{-\lambda Z_{t-s}}\right]\nonumber\\
&=&\frac{\psi(\varphi(t))}{\psi(\varphi(s))}\e^{\alpha(t-s)-\varphi(t)x}\pp_x\left[\frac{M^{(t)}_{t-s}}{M^{(t)}_0}\e^{-(\lambda-\varphi(s))Z_{t-s}}\right]\nonumber\\
&=&\mathrm{I}(\alpha,t,s)\times \mathrm{II}(\lambda,t,s),
\end{eqnarray}
where
$$
\mathrm{I}(\alpha,t,s):=\frac{\psi(\varphi(t))}{\psi(\varphi(s))}\e^{\alpha(t-s)-\varphi(t)x}\,\mbox{ and}\quad \mathrm{II}(\lambda,t,s)
:=\pp_x\left[\e^{-(\lambda-\varphi(s))Z_{t-s}}|\zeta=t\right].
$$
If $\alpha=0$, we have
\begin{equation}\label{prop1.2.1}
\lim_{r\to 0+}\psi(r)\e^{\alpha \phi(r)}=\lim_{r\to 0+}\psi(r)=0.
\end{equation}
Otherwise if $\alpha>0$, we note that by \eqref{Lap of Theta}
$$
\mathrm{E}[\Theta \e^{-r\Theta}]=-\alpha\phi'(r)\e^{-\alpha \phi(r)}=\frac{\alpha}{\psi(r)\e^{\alpha\phi(r)}}\quad\forall r>0.
$$
Consequently, we have
\begin{equation}\label{prop1.2.2}
\lim_{r\to 0+}\psi(r)\e^{\alpha\phi(r)}=\lim_{r\to 0+}\frac{\alpha}{\mathrm{E}\left[\Theta\e^{-r\Theta}\right]}
=\begin{cases}
\frac{\alpha}{\mathrm{E}[\Theta]}, &\mbox{ if \eqref{con1} holds;}\\
0,&\mbox{ if $\alpha>0$ and $\int^{+\infty}r\log r\pi(\mathrm{d}r)=+\infty$.}
\end{cases}
\end{equation}
Combing \eqref{prop1.2.1}, \eqref{prop1.2.2} with the fact that $\lim_{t\to+\infty}\varphi(t)=0$, we get that
\begin{equation}\nonumber
\lim_{t\to +\infty}\psi(\varphi(t))\e^{\alpha t}
=\lim_{t\to +\infty}\psi(\varphi(t))\e^{\alpha\phi(\varphi(t))}
=\begin{cases}
\frac{\alpha}{\mathrm{E}[\Theta]}, &\mbox{ if \eqref{con1} holds;}\\
0,&\mbox{ otherwise.}
\end{cases}
\end{equation}
It follows that
\begin{equation}\label{prop1.3}
\lim_{t\to+\infty}\mathrm{I}(\alpha,t,s)=
\begin{cases}
\frac{\alpha}{\mathrm{E}[\Theta]\psi(\varphi(s))}\e^{-\alpha s},&\quad\mbox{if \eqref{con1} holds;}\\
0,& \quad\mbox{otherwise}.
\end{cases}
\end{equation}
On the other hand, by Theorem \ref{thm for conditioned on fix time},
\begin{equation}\label{prop1.4}
\lim_{t\to+\infty}\mathrm{II}(\lambda,t,s)=\mathrm{E}\left[\e^{-(\lambda-\varphi(s))V_{s}}\right].
\end{equation}
Combining \eqref{prop1.2}, \eqref{prop1.3} and \eqref{prop1.4}, we have
\begin{equation*}
\lim_{t\to+\infty}\pp^{\uparrow}_x\left[\e^{-\lambda Z^{\uparrow}_t}\right]=
\begin{cases}
\frac{\alpha}{\mathrm{E}[\Theta]\psi(\varphi(s))}\e^{-\alpha s}\mathrm{E}\left[\e^{-(\lambda-\varphi(s))V_s}\right],&\quad\mbox{if \eqref{con1} holds;}\\
0,& \quad\mbox{otherwise}.
\end{cases}
\end{equation*}
Hence \eqref{prop1.1} follows by letting $s\to+\infty$, and we prove the first assertion.
If \eqref{con1} fails, we have
$\lim_{t\to+\infty}\pp_x^{\uparrow}\left[\e^{-\lambda Z^{\uparrow}_t}\right]=0$ for all $\lambda>0$.
Thus for any $M>0$,
$$
\pp^{\uparrow}_x\left(Z^{\uparrow}_t\le M\right)
=\pp^{\uparrow}_x\left(\e^{-Z^{\uparrow}_t}\ge \e^{-M}\right)\le \e^M\pp^{\uparrow}_x\left[\e^{-Z^{\uparrow}_t}\right]\to 0
$$
as $t\to +\infty$. Consequently
$\lim_{t\to+\infty}\pp^{\uparrow}_x\left(Z^{\uparrow}_{t}>M\right)=1$ for all $M>0$, and so $Z^{\uparrow}_t$ converges to infinity in probability.
 Hence we prove the second assertion.
\end{proof}

One can see from Proposition \ref{prop1} and Theorem \ref{thm for conditioned on fix time} that,
the two double limits coincide:
$$\lim_{s\to+\infty}\lim_{t\to +\infty}\pp_x(Z_t\in A|\zeta=t+s)=\lim_{t\to+\infty}\lim_{s\to +\infty}\pp_x( Z_t\in A|\zeta=t+s),
$$
for any Borel set $A\subset (0,+\infty)$
with $\p(V_\infty\in\partial A)=0$, and any $x>0$.
Moreover, the limit is non-degenerate if and only if \eqref{con1} holds.


\bigskip

\textbf{Acknowledgment}:
The research of Rongli Liu is supported by NSFC (Grant No. 12271374).
The research of Yan-Xia Ren is supported by
NSFC (Grant Nos. 12071011 and  12231002) and the Fundamental Research Funds for Central Universities, Peking University LMEQF.
The research of Ting Yang is supported by NSFC (Grant Nos. 12271374 and 12371143).

\appendix

\section{Appendix}

\begin{lem}\label{A1}
\begin{enumerate}
\item   Suppose $\nu_n,\nu$ are finite measures on $(0,+\infty)$ with $\widehat{\nu}(0)>0$. Then $\nu_n$ converges weakly to $\nu$ if for all $\lambda\ge 0$,
    \begin{equation}\label{A1.1}
    \widehat{\nu}(\lambda)<+\infty\mbox{ and }\,\,
    \widehat{\nu_n}(\lambda)\to\widehat{\nu}(\lambda)\mbox{ as }n\to+\infty.
    \end{equation}
\item  Suppose $\nu_n,\nu$ are measures
on $(0,+\infty)$ with $0<\widehat{\nu}(\beta)<+\infty$ for some $\beta>0$.
Then $\nu_n$ converges vaguely to $\nu$ if \eqref{A1.1} holds for all $\lambda\geq \beta$.
\end{enumerate}
\end{lem}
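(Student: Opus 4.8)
The plan is to prove the two assertions in turn, reducing part (2) to part (1) by an exponential tilt. For part (1), I would regard the finite measures $\nu_n,\nu$ as measures on the two-point compactification $[0,+\infty]$ of $(0,+\infty)$, noting that they charge neither $0$ nor $+\infty$. Each map $x\mapsto\e^{-\lambda x}$, $\lambda\ge 0$, extends continuously to $[0,+\infty]$, and the linear span of $\{\e^{-\lambda x}:\lambda\ge 0\}$ is an algebra (a product has summed exponent), contains the constants (take $\lambda=0$) and separates points of $[0,+\infty]$; by the Stone--Weierstrass theorem it is dense in $C([0,+\infty])$. Since $\widehat{\nu_n}(0)=\nu_n((0,+\infty))\to\widehat{\nu}(0)$, the total masses are uniformly bounded, so approximating a given $g\in C([0,+\infty])$ uniformly by a finite linear combination of the $\e^{-\lambda x}$ and invoking the hypothesis $\widehat{\nu_n}(\lambda)\to\widehat{\nu}(\lambda)$ for each $\lambda\ge 0$ yields $\langle g,\nu_n\rangle\to\langle g,\nu\rangle$. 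Thus $\nu_n\to\nu$ weakly on the compact space $[0,+\infty]$.

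The main obstacle is that the paper's notion of weak convergence on $(0,+\infty)$ tests against all bounded continuous $f\colon(0,+\infty)\to[0,+\infty)$, and such $f$ need not extend continuously to $[0,+\infty]$; I must also exclude mass of $\nu_n$ escaping to the endpoints $0$ and $+\infty$. To bridge this gap I would first establish tightness inside $(0,+\infty)$: given $\epsilon>0$, pick $0<a<b<+\infty$ that are continuity points of $\nu$ with $\nu((0,a)\cup(b,+\infty))<\epsilon$, which is possible because $\nu$ is finite and charges neither endpoint. Since $\{a\}$ and $\{b\}$ are $\nu$-null, weak convergence on $[0,+\infty]$ gives $\nu_n([a,b])\to\nu([a,b])$ and $\nu_n([0,+\infty])\to\nu([0,+\infty])$, whence $\nu_n((0,a)\cup(b,+\infty))<\epsilon$ for all large $n$. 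Then, for a prescribed $f\in C_b((0,+\infty),[0,+\infty))$, I would choose a continuous cutoff $\chi$ with $0\le\chi\le 1$, $\chi\equiv 1$ on $[a,b]$ and $\chi\equiv 0$ near $0$ and $+\infty$, so that $f\chi$ extends to an element of $C([0,+\infty])$; the convergence already proved applies to $f\chi$, while $\langle f(1-\chi),\nu_n\rangle$ and $\langle f(1-\chi),\nu\rangle$ are each at most $\|f\|_\infty\,\epsilon$ by the tightness estimate. Letting $\epsilon\to 0$ gives $\langle f,\nu_n\rangle\to\langle f,\nu\rangle$, which completes part (1).

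For part (2) I would tilt by $\e^{-\beta x}$: set $\nu_n^{\beta}(dx):=\e^{-\beta x}\nu_n(dx)$ and $\nu^{\beta}(dx):=\e^{-\beta x}\nu(dx)$. These are finite, since $\widehat{\nu^{\beta}}(0)=\widehat{\nu}(\beta)\in(0,+\infty)$ and $\widehat{\nu_n^{\beta}}(0)=\widehat{\nu_n}(\beta)\to\widehat{\nu}(\beta)$. Their Laplace transforms satisfy $\widehat{\nu_n^{\beta}}(\lambda)=\widehat{\nu_n}(\lambda+\beta)$ and $\widehat{\nu^{\beta}}(\lambda)=\widehat{\nu}(\lambda+\beta)$, so the assumption that \eqref{A1.1} holds for all $\lambda\ge\beta$ is precisely the hypothesis of part (1) for the pair $\nu_n^{\beta},\nu^{\beta}$ at all $\lambda\ge 0$. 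Hence $\nu_n^{\beta}\to\nu^{\beta}$ weakly by part (1). Finally, for any compactly supported bounded continuous $f\colon(0,+\infty)\to[0,+\infty)$, the function $x\mapsto f(x)\e^{\beta x}$ is again compactly supported, bounded, continuous and nonnegative, so $\langle f,\nu_n\rangle=\langle f\e^{\beta\cdot},\nu_n^{\beta}\rangle\to\langle f\e^{\beta\cdot},\nu^{\beta}\rangle=\langle f,\nu\rangle$, which is the asserted vague convergence.
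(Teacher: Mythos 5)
Your proof is correct, but it is organized differently from the paper's. For part (1) the paper simply normalizes $\nu_n$ and $\nu$ to probability measures $\rho_n=\nu_n/\widehat{\nu_n}(0)$, $\rho=\nu/\widehat{\nu}(0)$ and invokes the classical continuity theorem for Laplace transforms of probability measures on $[0,+\infty)$; you instead prove that continuity theorem from scratch, via Stone--Weierstrass on the compactification $[0,+\infty]$ followed by a tightness-and-cutoff argument to rule out escape of mass to the endpoints. Your version is longer but self-contained, and the tightness step (choosing $a,b$ to be non-atoms of $\nu$ so that the portmanteau theorem applies to $[0,a)\cup(b,+\infty]$) is handled correctly. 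For part (2) both arguments tilt by $\e^{-\beta x}$ and apply part (1) to $\nu_n^{\beta},\nu^{\beta}$, but the paper then goes through Kallenberg's criterion to establish vague relative compactness of $\{\nu_n\}$ and identifies every subsequential vague limit with $\nu$ by testing against $h\e^{-\beta\cdot}$ for $h\in C_c^+$. You bypass the compactness step entirely with the observation that for compactly supported $f$ the function $f\e^{\beta\cdot}$ is still a bounded continuous test function, so $\langle f,\nu_n\rangle=\langle f\e^{\beta\cdot},\nu_n^{\beta}\rangle\to\langle f\e^{\beta\cdot},\nu^{\beta}\rangle=\langle f,\nu\rangle$ directly. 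This is a genuine simplification of the paper's part (2): it trades the relative-compactness machinery for a one-line change of test function, at no loss of generality.
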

\begin{proof}
 (1) Without loss of generality we assume $\widehat{\nu_n}(0)>0$ for every $n\ge 1$.
 Let $\rho_n(\cdot):=\frac{\nu_n(\cdot)}{\widehat{\nu_n}(0)}$
 and $\rho(\cdot):=\frac{\nu(\cdot)}{\widehat{\nu}(0)}$.
 Then $\rho_n$ and $\rho$ are probability measures on $(0,+\infty)$ with
$\widehat{\rho_n}(\lambda)=\widehat{\nu_n}(\lambda)/\widehat{\nu_n}(0)$
and $\widehat{\rho}(\lambda)=\widehat{\nu}(\lambda)/\widehat{\nu}(0)$ for all $\lambda\ge 0$.
\eqref{A1.1} implies that $\rho_n$ converges weakly to $\rho$. The weak convergence of $\nu_n$ follows
from the weak convergence of $\rho_n$ immediately.

(2) Since $\nu_{n}$ and $\nu$ can be viewed as measures on $[0,+\infty)$ by setting $\nu_{n}(\{0\})=\nu(\{0\})=0$, this assertion is a direct result of \cite[Theorem 8.5.a]{BW}.
\end{proof}





%
%
%
%

\medskip

{\bf Rongli Liu}

School of Mathematics and Statistics, Beijing jiaotong University,

Beijing, 100044, P. R. China.

E-mail: rlliu@bjtu.edu.cn

\medskip

{\bf Yan-Xia Ren}

LMAM School of Mathematical Sciences \& Center for Statistical Science,

Peking University,

Beijing, 100871, P.R. China.

E-mail: yxren@math.pku.edu.cn

\medskip
{\bf Ting Yang}

School of Mathematics and Statistics, Beijing Institute of Technology,

Beijing, 100081, P.R.China;

Beijing Key Laboratory on MCAACI,

Beijing, 100081, P.R. China.

Email: yangt@bit.edu.cn

\end{document}